	\newcommand{\theoremname}{Theorem}%
	\newcommand{\sublemmaname}{Lemma}%
	\newcommand{\lemmaname}{Lemma}%
	\newcommand{\definitionname}{Definition}%
	\newcommand{\examplename}{Example}%
	\newcommand{\problemname}{Problem}%
	\newcommand{\conjecturename}{Conjecture}%
	\newcommand{\assumptionname}{Assumption}%
	\newcommand{\corollaryname}{Corollary}%
	\newcommand{\propositionname}{Proposition}%
	\newcommand{\remarkname}{Remark}%
\newtheorem{theorem}[subsection]{\protect\theoremname}
\newtheorem{lemma}[subsection]{\protect\lemmaname}
\newtheorem{sublemma}[subsubsection]{\protect\sublemmaname}
\newtheorem{proposition}[subsection]{\protect\propositionname}
\newtheorem{definition}[subsection]{\protect\definitionname}
\newtheorem{example}[subsection]{\protect\examplename}
\newtheorem{remark}[subsection]{\protect\remarkname}
\newcommand\testshape{family=\f@family; series=\f@series; shape=\f@shape.}
\def\myemphInternal#1{\if n\f@shape%
\begingroup\itshape #1\endgroup\/%
\else\begingroup\bfseries #1\endgroup%
\fi}
\def\myemph{\futurelet\testchar\MaybeOptArgmyemph}
\def\MaybeOptArgmyemph{\ifx[\testchar \let\next\OptArgmyemph
                 \else \let\next\NoOptArgmyemph \fi \next}
\def\OptArgmyemph[#1]#2{\index{#1}\myemphInternal{#2}}
\def\NoOptArgmyemph#1{\myemphInternal{#1}}
\newcommand{\bR}{\mathbb{R}}
\newcommand{\bN}{\mathbb{N}}
\newcommand{\Int}{\mathrm{Int}}
\newcommand\Xsp{X}
\newcommand\Ysp{Y}
\newcommand\Usp{U}
\newcommand\Vsp{V}
\newcommand\chartMap{\varphi} 
\newcommand{\Btrans}{B}
\newcommand\dif{h}
\newcommand\kdif{k}
\newcommand{\Partition}{\Delta}
\newcommand{\leaf}{\omega}
\newcommand{\strip}{S}
\newcommand{\stripSurf}{Z}
\newcommand{\preStripSurf}{\stripSurf_0}
\newcommand{\bdX}{X}
\newcommand{\bdY}{Y}
\newcommand{\aind}{{\alpha}}
\newcommand{\Aind}{\mathbf{A}}
\newcommand{\stInd}{{\lambda}}
\newcommand{\StInd}{\Lambda}
\newcommand{\bdGlueInd}{{\gamma}}
\newcommand{\BdGlueInd}{\Gamma}
\newcommand{\id}{\mathrm{id}}
\newcommand{\qmap}{q}
\renewcommand{\emptyset}{\varnothing}
\newcommand{\leveld}[1]{u_{#1}}
\newcommand\HS{\mathcal{H}(\Partition)}
\newcommand\HZS{\mathcal{H}_{\id}(\Partition)}
\newcommand\Homeo{\mathcal{H}}
\newcommand\Gr{G}
\newcommand\AutG{\mathrm{Aut}(\Gr)}
\newcommand\ori{\mathbf{or}}
\newcommand\lori{l}
\newcommand\tori{\tau}
\newcommand\edge[2]{\{#1,#2\}}
\newcommand\eiso{\varepsilon}
\newcommand\viso{\nu}
\newcommand\mZZ{\{\pm1\}}
\newcommand\hcl[1]{\mathrm{c}(#1)}
\newcommand\JL{J_{\leaf}}
\begin{document}

\author{Sergiy Maksymenko, Eugene Polulyakh, Yuliya Soroka}

\title{Homeotopy groups of one-dimensional foliations on surfaces}

\begin{abstract}
Let $Z$ be a non-compact two-dimensional manifold obtained from a family of open strips $\mathbb{R}\times(0,1)$ with boundary intervals by gluing those strips along their boundary intervals.
Every such strip has a foliation into parallel lines $\mathbb{R}\times t$, $t\in(0,1)$, and boundary intervals, whence we get a foliation $\Delta$ on all of $Z$.
Many types of foliations on surfaces with leaves homeomorphic to the real line have such ``striped'' structure.
That fact was discovered by W.~Kaplan (1940-41) for foliations on the plane $\mathbb{R}^2$ by level-set of pseudo-harmonic functions $\mathbb{R}^2 \to \mathbb{R}$ without singularities.

Previously, the first two authors studied the homotopy type of the group $\mathcal{H}(\Delta)$ of homeomorphisms of $Z$ sending leaves of $\Delta$ onto leaves, and shown that except for two cases the identity path component $\mathcal{H}_{0}(\Delta)$ of $\mathcal{H}(\Delta)$ is contractible.
The aim of the present paper is to show that the quotient $\mathcal{H}(\Delta)/ \mathcal{H}_{0}(\Delta)$ can be identified with the group of automorphisms of a certain graph with additional structure encoding the ``combinatorics'' of gluing.
\end{abstract}

\keywords{foliations, striped surface}
\subjclass[2000]{
     57R30, 
     55P15 
}

\maketitle

\section{Introduction}

Let $\stripSurf$ be a non-compact two-dimensional manifold and $\Partition$ be a one-dimensional foliation on $\stripSurf$ such that each leaf $\omega$ of $\Partition$ is homeomorphic to $\bR$ and is a closed subset of $\stripSurf$.
These foliations on the plane often appear as level-sets of pseudoharmonic functions and from that point of view they were studied by W.~Kaplan~\cite{Kaplan:DJM:1940}, \cite{Kaplan:DJM:1941}, W.~Boothby~\cite{Boothby:AJM_1:1951}, \cite{Boothby:AJM_2:1951}, M.~Morse and J.~Jenkins~\cite{JenkinsMorse:AJM:1952}, M.~Morse~\cite{Morse:FM:1952} and others.

In particular, Kaplan proved that for every such a foliation there exists at most countably many leaves $\{\omega_i\}_{i\in J}$ such that for every connected component $\strip$ of $\bR^2\setminus\{\omega_i\}_{i\in J}$ one can find a homeomorphism $\phi:\strip\to\bR^2 \times (0,1)$ sending the leaves in $\strip$ onto horizontal lines $\bR\times\{t\}$, $i\in(0,1)$.
However his construction was not \textit{canonical}, as he tried to minimize the total number of strips, and for that reason the closure $\overline{\strip}$ may have very complicated structure.
For instance the above homeomorphism $\phi$ not always extends to an embedding of $\overline{\strip}$ into $\bR\times[0,1]$.

In \cite[Theorem~1.8]{MaksymenkoPolulyakh:MFAT:2016} the first two authors gave sufficient conditions for a one-dimensional foliation on a non-compact surface to have a similar \myemph{striped} structure, and proposed a certain \textit{canonical} decomposition into strips whose closures homeomorphic to open subsets of $\bR\times[0,1]$.

Also, in~\cite{MaksymenkoPolulyakh:PGC:2015} the same authors considered arbitrary foliated surfaces $(\stripSurf,\Partition)$ glued from strips in the above way and studied the homotopy properties of the group of homeomorphisms $\Homeo(\Partition)$ of $\stripSurf$ mapping leaves of the foliation $\Partition$ into leaves. 
They proved that except for few cases the identity path component $\Homeo_0(\Partition)$ of $\Homeo(\Partition)$ is contractible.
The principal technical assumption in~\cite{MaksymenkoPolulyakh:PGC:2015} was that the gluing maps between boundary intervals of strips must be \textit{affine}.

The quotient $\pi_0\Homeo(\Partition) = \Homeo(\Partition)/\Homeo_0(\Partition)$ is an analogue of a mapping class group for foliated homeomorphisms and we call it the \textit{homeotopy} group of the foliation $\Partition$.
In~\cite{Soroka:MFAT:2016} and~\cite{Soroka:UMJ:2017} the third author studied a special class of so-called ``rooted tree like'' striped surfaces, completely described algebraic structure of homeotopy groups of their foliations, and also related those groups with the homeotopy groups of the space of leaves $\stripSurf/\Partition$.

The aim of the present paper is to extend the results of~\cite{MaksymenkoPolulyakh:PGC:2015} to arbitrary ``striped'' surfaces and compute the corresponding homeotopy groups.
Namely, we show that $\pi_0\Homeo(\Partition)$ is isomorphic to a group of automorphism of a certain graph with additional structure, see Theorem~\ref{th:HZ_AutG}.
In particular, these results hold for all foliations considered in~\cite{MaksymenkoPolulyakh:MFAT:2016}, \cite{Soroka:MFAT:2016}, \cite{Soroka:UMJ:2017}.

{\bf Structure of the paper.}
\S\ref{sect:main_results} contains a list of the principal results of the paper.
First we give a formal definition of a strip and then show in Proposition~\ref{pr:strip_repl_by_model_strip} that up to a foliated homeomorphism it can be replaced by a \textit{model} strip having better disposition of boundary intervals.

Next, we characterize homeomorphisms between boundaries of strips which extend to foliated homeomorphisms between strips, see Theorem~\ref{th:homeomorphism_of_strips}.
\S\ref{sect:proof:pr:strip_repl_by_model_strip} and \S\ref{sect:proof:th:homeomorphism_of_strips} are devoted to proofs of those results.

In \S\ref{sect:stripped_atlas} we introduce a notion of a \textit{striped} atlas on a surface $\stripSurf$, being a decomposition into strips glued along boundary intervals, and prove that gluing homeomorphisms can be made affine, see Theorem~\ref{th:all_striped_surf_are_affine}.

Further, in \S\ref{sect:graph_of_a_striped_atlas}, we associate to each striped atlas a certain \textit{graph} $\Gr$ which encodes the ``combinatorics'' of gluing strips, and relate automorphisms of $\Gr$ with self-equivalences of the corresponding atlas, see Theorem~\ref{th:iso_atlas_iso_graph}.

\S\ref{sect:charact_str_prop} establishes relationships between distinct properties of foliated surfaces considered in~\cite{MaksymenkoPolulyakh:PGC:2015}, \cite{MaksymenkoPolulyakh:MFAT:2016}, and \cite{MaksymenkoPolulyakh:PGC:2016}, see Theorem~\ref{th:rel_between_conditions}.

Finally, in \S\ref{sect:homeotopy_group} we consider the group $\HS$ of homeomorphisms of the foliation $\Partition$ and deduce from~\cite{MaksymenkoPolulyakh:PGC:2015} and results of previous sections that the homeotopy group $\pi_0\Homeo(\Partition)$ is isomorphic with the group of automorphisms of the graph associated to some special striped atlas of $\stripSurf$.

\section{Model strips}\label{sect:main_results}
Let $\stripSurf$ be a two-dimensional topological manifold.
A \myemph{foliated chart of dimension $1$} on $\stripSurf$ is a pair $(\Usp, \chartMap)$, where $\Usp \subset \stripSurf$ is an open subset and $\chartMap : \Usp \to (a,b) \times \Btrans$ is a homeomorphism with $\Btrans$ being an open subset of $[0,+\infty)$.
The set $P_y = \chartMap^{-1}\bigl((a,b) \times \{y\}\bigr)$, $y \in \Btrans$, is then called a \myemph{plaque} of this foliated chart.

Suppose $\Partition = \{ \leaf_\alpha \mid \alpha \in A \}$ is a partition of $\stripSurf$ into path connected subsets and there exists an atlas $\mathcal{A} = \{\Usp_i, \chartMap_i\}_{i\in\Lambda}$ of foliated charts of dimension $1$ on $\stripSurf$ such that for each $\alpha \in A$ and each $i \in \Lambda$ every path component of a set $\leaf_\alpha \cap \Usp_i$ is a plaque.
Then $\Partition$ is said to be a \myemph{one-dimensional foliation} on $\stripSurf$ and $\{\Usp_i, \chartMap_i\}_{i\in\Lambda}$ is called a \myemph{foliated atlas associated to $\Partition$}.
Every $\leaf_\alpha$ is then a \myemph{leaf} of the foliation $\Partition$ and the pair $(\stripSurf, \Partition)$ is a \myemph{foliated surface}.

Let $(\stripSurf_1, \Partition_1)$ and $(\stripSurf_2, \Partition_2)$ be two foliated surfaces.
Then a homeomorphism $h:\stripSurf_1 \to \stripSurf_2$ is said to be \myemph{foliated} if for each leaf $\leaf \in\Partition_1$ its image, $h(\leaf)$, is a leaf of $\Partition_2$.

\begin{definition}\label{def:strip}
A subset $\strip \subset \bR^2$ will be called a \myemph{strip} if 
\begin{enumerate}[label=(\roman*)]
\item\label{enum:strip:contains_open_strip}
$\bR \times (u,v) \ \subset \ \strip \ \subset \  \bR \times [u,v]$;
\item\label{enum:strip:open}
$\strip$ is open in the topology of $\bR \times [u,v]$
\end{enumerate}
for some $u<v\in\bR$.
Denote
\begin{align*}
	\partial_{-}\strip &:= \strip \ \cap \ \bR \times \lbrace u \rbrace, &
	\partial_{+}\strip &:= \strip \ \cap \ \bR \times \lbrace v \rbrace, \\
	\partial\strip &:= \partial_{-}\strip \ \cup \ \partial_{+}\strip, &
	\Int\strip &:= \bR\times(u,v).
\end{align*}
We will call $\partial\strip$ the \myemph{boundary} of $\strip$, while $\partial_{-}\strip$ and $\partial_{+}\strip$ will be the \myemph{sides} of $\strip$.
It follows that $\partial\strip$ is an open subset of $\bR\times\{u,v\}$, and so it is a disjoint union of at most countably many open (possibly unbounded) intervals.

If, in addition to~\ref{enum:strip:contains_open_strip} and~\ref{enum:strip:open}, the following conditions hold:
\begin{enumerate}[label=(\roman*), resume]
\item\label{enum:strip:bounded_intervals}
every connected component of $\partial\strip$ is a \myemph{bounded} interval,
\item\label{enum:strip:disjoint_closures}
the closures of boundary intervals of $\partial\strip$ in $\bR\times[u,v]$ are mutually disjoint,
\end{enumerate}
then $\strip$ will be called a \myemph{model strip}.

Evidently, each strip $\strip$ possesses an oriented one-dimensional foliation into horizontal lines $\bR \times t$, $t \in (u,v)$ and boundary intervals of $\partial\strip$.
We will call that foliation \myemph{canonical}.
\end{definition}

The following statement allows to reduce any strip to a technically more convenient form.
It will be proved in Section~\ref{sect:proof:pr:strip_repl_by_model_strip}.
\begin{proposition}\label{pr:strip_repl_by_model_strip}
Each strip is foliated homeomorphic to a model strip.
\end{proposition}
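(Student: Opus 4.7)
My plan is to produce an explicit model strip $\strip'$ together with a foliated homeomorphism $h\colon\strip\to\strip'$. Decompose $\partial_-\strip=\bigsqcup_{i\in I^-}J_i^-$ and $\partial_+\strip=\bigsqcup_{j\in I^+}J_j^+$ into connected components; since $\partial\strip$ is an open subset of the one-manifold $\bR\times\{u,v\}$, both index sets are at most countable. Choose two pairwise-disjoint families of bounded open intervals $\{\tilde J_i^-\}_{i\in I^-}$ and $\{\tilde J_j^+\}_{j\in I^+}$ in $\bR$ with pairwise disjoint closures, for instance by placing them around distinct integers, and set
\[
  \strip':=\bR\times(u,v)\ \cup\ \bigsqcup_{i\in I^-}\tilde J_i^-\times\{u\}\ \cup\ \bigsqcup_{j\in I^+}\tilde J_j^+\times\{v\},
\]
which is a model strip by construction.

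Next I would build $h=h_+\circ h_-$, where $h_-$ reshapes only the bottom side and $h_+$ only the top. By symmetry it suffices to describe $h_-$; its target is the intermediate strip having the bottom of $\strip'$ and the top of $\strip$. Since a foliated homeomorphism must send interior leaves to interior leaves, I look for $h_-$ of the form $h_-(x,t)=(\alpha(x,t),t)$, with $\alpha(\cdot,t)\colon\bR\to\bR$ a homeomorphism for every $t\in(u,v)$. Fix homeomorphisms $\psi_i\colon J_i^-\to\tilde J_i^-$ and prescribe $\alpha(\cdot,u)=\bigsqcup_i\psi_i$ on $\partial_-\strip$. Away from the bottom, say for $t\ge u+\tfrac{v-u}{2}$, let $\alpha(\cdot,t)=\id_{\bR}$, and interpolate continuously for $t\in(u,u+\tfrac{v-u}{2})$.

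The main obstacle is reconciling continuity of $\alpha$ with the requirement that $\alpha(\cdot,t)$ be surjective onto $\bR$ for every $t>u$. For $t$ close to $u$, the values in $\bR\setminus\bigsqcup_i\tilde J_i^-$ - both the "gap" values between consecutive target intervals $\tilde J_i^-$ and the values lying beyond the bounded envelope of $\bigsqcup_i\tilde J_i^-$ - must still be attained, even though at $t=u$ they are missed entirely. The delicate cases are (a) an unbounded boundary interval $J_i^-$ whose image $\tilde J_i^-$ is bounded, so that $\psi_i$ "compresses infinity", and (b) two boundary intervals $J_i^-,J_{i'}^-$ sharing an endpoint while their images are separated by a gap. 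In both cases the problematic source point lies in $\overline{\partial_-\strip}\setminus\partial_-\strip$ and so is not in $\strip$; no continuity is formally required there, and my plan is to exploit this freedom by letting $\alpha(x,t)$ diverge through precisely the missing target values as $(x,t)$ approaches such a point. Concretely, I would supplement the interpolation from $\bigsqcup_i\psi_i$ to $\id_{\bR}$ by an auxiliary stretching term whose support is a shrinking-as-$t\to u^+$ neighborhood of the endpoints of collars covering each $J_i^-$ and whose magnitude blows up fast enough to sweep out the gap values along every interior level. Once the blow-up rates have been coordinated across all boundary components, checking that $\alpha$ is continuous on $\strip$ and that each level map is a homeomorphism of $\bR$ is routine; the analogous construction on top yields $h_+$, and $h=h_+\circ h_-$ is the desired foliated homeomorphism onto $\strip'$.
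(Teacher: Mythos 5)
Your approach is genuinely different from the paper's. The paper reduces to half-strips, then proceeds in two explicit stages: (i) a single homeomorphism (cited from an earlier paper) that shrinks the entire roof $\bR\times\{1\}$ into a bounded interval $(a,b)\times\{1\}$ while preserving the second coordinate, thereby making all boundary intervals bounded at once; (ii) a countable sequence of local ``triangle'' surgeries, each of which removes the right half of one boundary interval inside a small triangle at level $u_i\to 1$, together with a verification that the infinite composition stabilizes on a neighborhood of every point of $\strip$. You instead fix the target model strip $\strip'$ in advance and try to build one interpolating foliated homeomorphism directly. The shared key observation -- that the problematic limit points lie in $\overline{\partial\strip}\setminus\partial\strip$ and are therefore absent from $\strip$, so the level maps may blow up near them -- is correct, and in principle your route can be made to work; but as written it has two real gaps.

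First, the target intervals $\tilde J_i^-$ must be chosen \emph{in the same order} as the $J_i^-$: the level maps $\alpha(\cdot,t)$ are increasing homeomorphisms of $\bR$, so their limit on $\partial_-\strip$ must respect the order, which forces $\tilde J_i^-<\tilde J_j^-$ whenever $J_i^-<J_j^-$. You never mention order, and ``placing them around distinct integers'' can actually fail: if, say, $J_i^-=(1/(i+1),1/i)$ together with some interval left of $0$, the order type forces the $\tilde J_i^-$ to accumulate at a finite point while staying to the right of a fixed interval, which no family centered at integers can do; you must allow the targets to accumulate (still with disjoint closures). Second, and more seriously, the ``coordination of blow-up rates'' that you dismiss as routine is precisely the technical heart of the statement. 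Your list of delicate cases omits the one that drives the paper's argument: boundary intervals accumulating at an interior point of $\bR\times\{u\}$. There, for every fixed $t>u$, infinitely many shrinking blow-up collars are active simultaneously, and one has to check that their union with the interpolated $\psi_i$'s still assembles into a single increasing homeomorphism of $\bR$, and that the resulting $\alpha$ is continuous (and has continuous inverse) on all of $\strip$. The paper devotes exactly this kind of care to the problem -- the levels $u_i\to 1$ are chosen so that the triangle surgeries have supports leaving every compactum, and the stabilization of the infinite composition $\cdots\circ f_2\circ f_1$ on a neighborhood of each point is verified case by case. Your proposal gives no formula and no stabilization-type argument, so the ``checking is routine'' claim is unsupported at the step where the real work lies.
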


\subsection*{Monotone homeomorphisms of $\partial\strip$.}
Notice that the boundary of a strip can be regarded as a partially ordered set being a disjoint union of two linearly ordered sets $\partial_{-}\strip$ and $\partial_{+}\strip$ that are incomparable with each other.
In other words, for $(a,x), (b,y) \in \partial\strip$ we assume that $(a,x)<(b,y)$ if and only if $a<b$ and $x=y$.

More generally, let $A,B \subset \partial\strip$ be two subsets.
Then we say that $A<B$ if and only if $a<b$ for all $a\in A$ and $b\in B$.
In particular, this gives a linear order on the boundary intervals of $\partial_{-}\strip$ and $\partial_{+}\strip$.
Thus, if $I_{\alpha} = (a,b) \times\{x\}$ and $I_{\beta} = (c,d) \times\{y\}$ are boundary intervals of $\partial\strip$ with $x,y\in\{u,v\}$, then $I_{\alpha} < I_{\beta}$ if and only if $x=y$ and $b < c$.

Now let $\strip_1$ and $\strip_2$ be two strips, $A\subset\partial\strip_1$ and $B\subset\partial\strip_2$ be subsets, and $\dif: A \to B$ be a bijection.
We will say that $\dif$ \myemph{preserves} (resp. \myemph{reverses}) order whenever for any $a,a'\in A$ we have that $a<a'$ if and only if $\dif(a)<\dif(a')$ (resp. $\dif(a)>\dif(a')$). 
In either of these cases $\dif$ is said to be \myemph{monotone}.

Evidently, if $\dif:\strip_1\to\strip_2$ is a foliated homeomorhism between two strips, then its restriction $\dif|_{\partial\strip_1}: \partial\strip_1 \to \partial\strip_2$ is monotone.
The following statement is a converse to the latter observation. 
It will be proved in Section~\ref{sect:proof:th:homeomorphism_of_strips}.

\begin{theorem}\label{th:homeomorphism_of_strips}
Every monotone homeomorphism $\dif: \partial \strip_1 \rightarrow \partial \strip_2$ between boundaries of two strips $\strip_1$ and $\strip_2$ extends to a foliated homeomorphism $\hat{\dif}: \strip_1 \rightarrow \strip_2$.
\end{theorem}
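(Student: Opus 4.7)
The plan is to reduce to a canonical situation and then construct $\hat\dif$ via a local-to-global argument using foliated caps above each boundary interval.

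First, by Proposition~\ref{pr:strip_repl_by_model_strip} I may assume both $\strip_1$ and $\strip_2$ are model strips, and after an affine rescaling of the second coordinate that both lie in $\bR\times[0,1]$. Since the two sides of $\partial\strip_i$ are pairwise incomparable under the partial order on $\partial\strip_i$, a monotone bijection must send each side of $\strip_1$ into a single side of $\strip_2$. By composing $\dif$ with the vertical reflection $(x,t)\mapsto(x,1-t)$ of $\strip_2$ (when $\dif$ swaps the two sides) and the horizontal reflection $(x,t)\mapsto(-x,t)$ (when $\dif$ reverses order), I reduce to the case where $\dif(\partial_{-}\strip_1)=\partial_{-}\strip_2$, $\dif(\partial_{+}\strip_1)=\partial_{+}\strip_2$, and $\dif$ is increasing on each side.

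Next, I decompose $\dif|_{\partial_{-}\strip_1}$ as a disjoint union of increasing homeomorphisms $\phi_\alpha:(a_\alpha,b_\alpha)\to(c_\alpha,d_\alpha)$ between corresponding boundary intervals, and similarly $\psi_\beta$ for $\partial_{+}$. Using the model-strip hypothesis that the closures of boundary intervals are pairwise disjoint, for each $\alpha$ I choose a continuous function $\mu_\alpha:(a_\alpha,b_\alpha)\to(0,1/3]$ tending to $0$ at the endpoints, and form the foliated cap
\[
N_\alpha^{-} := \{(x,t)\in\strip_1:a_\alpha<x<b_\alpha,\ 0\le t<\mu_\alpha(x)\},
\]
so that all caps $N_\alpha^{-}$ are pairwise disjoint. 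On each $N_\alpha^{-}$ I set $\hat\dif(x,t):=(\phi_\alpha(x),t)$; this is a foliated homeomorphism onto the analogous cap $\tilde N_\alpha^{-}\subset\strip_2$ above $\dif(I_\alpha^{-})$, extending continuously to $\phi_\alpha$ on the base interval $I_\alpha^{-}$. The analogous construction defines $\hat\dif$ on caps $N_\beta^{+}$ above $\partial_{+}\strip_1$. Finally, I extend $\hat\dif$ across the middle region $M_1:=\strip_1\setminus\bigl(\bigcup_\alpha N_\alpha^{-}\cup\bigcup_\beta N_\beta^{+}\bigr)$ by a foliated homeomorphism $M_1\to M_2$ that matches the map already defined on the roofs of the caps.

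The main obstacle is this last step. Boundary intervals of $\partial_{-}\strip_i$ may accumulate at points of $\bR\times\{0\}$ lying outside $\strip_i$, and the order-theoretic pattern of such accumulation need not be preserved by $\dif$; consequently $\dif|_{\partial_{-}\strip_1}$ does not in general extend to a homeomorphism of the ambient line $\bR\times\{0\}$, and the simple candidate $\hat\dif(x,t)=\bigl((1-t)\tilde g_{-}(x)+t\tilde g_{+}(x),t\bigr)$ is not available. The cap construction circumvents this by working locally, but verifying that the middle-region map can be chosen to be a foliated homeomorphism---particularly near the points where boundary intervals accumulate (which lie in neither $\strip_1$ nor $\strip_2$, and hence require no continuity check)---is the technical heart of the argument. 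I expect this step to reduce to a slice-wise combinatorial analysis: on each horizontal leaf, both source and target are complements of open intervals matched by $\dif$, and an increasing homeomorphism between them can be built explicitly and shown to depend continuously on $t$.
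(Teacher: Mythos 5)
Your approach is a genuine alternative to the one in the paper, and the comparison is instructive. The paper first reduces to the case of a half strip whose lower side is the full line $\bR\times\{0\}$ (splitting a general strip along its midline, Cases 2 and 3), and for such a half strip builds the extension from a sequence of line homeomorphisms $\psi_j\colon\bR\to\bR$ attached to levels $u_j\nearrow 1$, where $\psi_j$ agrees with the given boundary map on only the first $j$ boundary intervals of $\partial_{+}\strip$; it then linearly interpolates between consecutive $\psi_j$'s. The accumulation problem you correctly identify is sidestepped there because at each level $u_j$ only finitely many intervals must be matched, so Lemma~\ref{lem:homeomorphism_of_line} applies directly, and the rest of the matching is deferred to higher levels. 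Your cap construction attacks the same difficulty from the opposite direction: instead of letting the set of matched intervals grow with the level, you shrink the vertical extent of each cap near the endpoints of its interval, so that the problematic accumulation is concentrated at points of $\bR\times\{0,1\}$ outside the strip, where no continuity is required. Your construction also treats both sides of the strip simultaneously, dispensing with the paper's reduction to half strips; the order/side bookkeeping at the start of your argument is carried out correctly.

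That said, the proposal is an outline rather than a proof, and the step you flag yourself --- constructing the middle-region homeomorphism $M_1\to M_2$ compatibly with the cap maps --- is precisely where all the work of the theorem lives; it cannot be dismissed as ``a slice-wise combinatorial analysis.'' Concretely, you must exhibit a family of increasing homeomorphisms $\theta_t$ on the $t$-slices of $M_1$ satisfying the moving constraints imposed by the cap roofs; show that $(x,t)\mapsto(\theta_t(x),t)$ and its inverse are continuous on $M_1$ and $M_2$ (a continuous bijection is not automatically a homeomorphism here); and verify continuity of the family $\theta_t$ even as infinitely many caps become active when $t\to 0^+$ or $t\to 1^-$. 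This requires an explicit construction in the spirit of (but going beyond) Lemma~\ref{lem:homeomorphism_of_line}, together with a vertical-continuity check --- which is essentially what the paper's interpolation formula~\eqref{equ:formula_for_h} and the accompanying open-covering argument in Case~1 supply. Until that step is carried out, the proposal identifies but does not fill the key difficulty.
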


\section{Proof of Proposition~\ref{pr:strip_repl_by_model_strip}}\label{sect:proof:pr:strip_repl_by_model_strip}
\begin{lemma}\label{lm:half_strip_repl_by_model_half_strip}
Let $\strip$ be a half strip with $\bR\times[0,1) \subset \strip \subset \bR\times[0,1]$.
Then there exists a half strip $\strip'$ and foliated homeomorphism $\dif:\strip\to\strip'$ such that 
\begin{itemize}[leftmargin=2em]
\item 
$\bR\times[0,1) \ \subset \ \strip' \ \subset \ \strip \  \subset \ \bR\times[0,1]$;
\item 
the closures of boundary intervals of $\partial_{+}\strip'$ are bounded in $\bR^2$ and mutually disjoint;
\item 
$\dif$ is fixed on $\bR\times 0$;
\item 
$\dif$ preserves the second coordinate, and therefore is foliated.
\end{itemize}
\end{lemma}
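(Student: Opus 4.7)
The plan is to construct $\dif$ by gluing second-coordinate-preserving local homeomorphisms defined separately over each component of $\partial_{+}\strip$, filling in by the identity on the remainder. Since $\strip$ is open in $\bR\times[0,1]$ and contains both $\bR\times[0,1)$ and every interval $I_\alpha=J_\alpha\times\{1\}$ of $\partial_{+}\strip$, the closed rectangle $J_\alpha\times[0,1]$ lies entirely in $\strip$. Writing $K:=\bR\setminus\bigcup_\alpha J_\alpha$ we thus obtain the partition
\[
\strip \;=\; \bigl(K\times[0,1)\bigr)\;\sqcup\;\bigsqcup_{\alpha}\bigl(J_\alpha\times[0,1]\bigr).
\]
For each $\alpha$ pick a bounded open subinterval $J'_\alpha\subset J_\alpha$ with closure strictly inside $J_\alpha$ (shrink by a small amount from each finite endpoint, and truncate beyond some large finite value on any unbounded side). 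Disjointness of the $J_\alpha$ forces the closures $\overline{J'_\alpha}\subset\bR$ to be pairwise disjoint and bounded, so
\[
\strip':=\bigl(\bR\times[0,1)\bigr)\cup\bigcup_{\alpha}\bigl(J'_\alpha\times\{1\}\bigr)
\]
already meets the required conditions on $\partial_{+}\strip'$.

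The core step is to build, for each $\alpha$, a level-preserving homeomorphism
\[
\dif_\alpha:\, J_\alpha\times[0,1]\ \longrightarrow\ \bigl(J_\alpha\times[0,1)\bigr)\cup\bigl(J'_\alpha\times\{1\}\bigr)
\]
which is the identity at $t=0$, restricts at $t=1$ to a chosen orientation-preserving homeomorphism $\phi_\alpha:J_\alpha\to J'_\alpha$, and tends to the identity along the lateral boundary $\partial J_\alpha\times[0,1)$. A uniform formula handles bounded and unbounded $J_\alpha$ alike: fix any orientation-preserving homeomorphism $h_\alpha:J_\alpha\to\bR$ and set
\[
\dif_\alpha(x,t) \;:=\; \Bigl(\,h_\alpha^{-1}\bigl((1-t)\,h_\alpha(x)+t\,h_\alpha(\phi_\alpha(x))\bigr),\ t\,\Bigr).
\]
For $t\in(0,1)$ the first coordinate is, in the $h_\alpha$-coordinate, a strict convex combination of two increasing bijections, hence an increasing self-homeomorphism of $J_\alpha$; at $t=0$ and $t=1$ it reduces to $x$ and $\phi_\alpha(x)$ respectively. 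As $x\to\partial J_\alpha$ we have $h_\alpha(x)\to\pm\infty$ while $h_\alpha(\phi_\alpha(x))$ stays bounded, so for $t<1$ the whole combination tends to $\pm\infty$ and the first coordinate tends to the appropriate endpoint of $J_\alpha$; this matches the identity on $K\times[0,1)$ across the lateral boundary.

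Now assemble $\dif$ equal to $\dif_\alpha$ on each $J_\alpha\times[0,1]$ and equal to the identity on $K\times[0,1)$. The only genuine obstacle is continuity of $\dif$ and $\dif^{-1}$ at a point $(x_0,t_0)\in K\times[0,1)$ which is a limit of infinitely many rectangles $J_{\alpha_n}\times[0,1]$. This is handled by the elementary observation that pairwise disjoint open intervals of $\bR$ accumulating at a finite point must have lengths tending to zero, since otherwise infinitely many of them would overlap some common bounded neighbourhood, contradicting disjointness. Because $\dif_{\alpha_n}$ sends $J_{\alpha_n}\times[0,1]$ back into $J_{\alpha_n}\times[0,1]$, the displacement $|\dif_{\alpha_n}(x,t)-(x,t)|$ is at most $|J_{\alpha_n}|\to 0$, and continuity at $(x_0,t_0)$ follows. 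The same bound applied to $\dif_\alpha^{-1}$ gives continuity of $\dif^{-1}$, completing the construction.
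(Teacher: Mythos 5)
Your construction is correct, but it takes a genuinely different route from the paper's. The paper proceeds in two stages: first it applies an external lemma ([MaksymenkoPolulyakh:PGC:2015, Lemma~3.2]) to build a level-preserving homeomorphism of $\bR\times[0,1]$ onto $\bR\times[0,1)\cup(a,b)\times\{1\}$, which at once forces all boundary intervals to become bounded; then it invokes a separate ``triangle lemma'' to chop off the right half $[C_i,B_i)$ of each resulting interval, and assembles the infinitely many triangle homeomorphisms $f_i$ into a single map $\varphi=\cdots\circ f_2\circ f_1$, verifying stabilization of the partial compositions on a neighbourhood of every point. Your proof instead decomposes $\strip$ into vertical rectangles $J_\alpha\times[0,1]$ over each boundary interval plus the leftover $K\times[0,1)$, shrinks each $J_\alpha$ symmetrically into a compactly contained bounded subinterval $J'_\alpha$ by one explicit formula $\dif_\alpha(x,t)=\bigl(h_\alpha^{-1}((1-t)h_\alpha(x)+t\,h_\alpha(\phi_\alpha(x))),t\bigr)$, and glues with the identity. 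This handles the boundedness and disjointness requirements in one pass, does not rely on the external lemma, and replaces the infinite-composition/stabilization argument by a direct continuity check at the gluing locus. The place where care is needed --- and which you do address, though tersely --- is continuity of the glued map at points $(x_0,t_0)\in K\times[0,1)$: the lateral-boundary limit handles the at most two intervals $J_\alpha$ having $x_0$ as an endpoint (where the length bound is useless since $J_\alpha$ may be unbounded), while the shrinking-length observation handles accumulation by infinitely many small intervals, and both cases can occur simultaneously. It would strengthen the write-up to note explicitly that, for fixed $\varepsilon>0$, at most two of the $J_\alpha$ meeting $(x_0-\delta,x_0+\delta)$ can fail to lie inside $(x_0-\varepsilon,x_0+\varepsilon)$, and that one should also verify openness of each $\dif_\alpha$ at $t=1$ (a continuous bijection between the non-compact rectangle and its image); both of these follow from the same monotonicity-and-limits reasoning you already use.
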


Assuming lemma is true let us deduce Proposition~\ref{pr:strip_repl_by_model_strip}.
Let $\strip$ be a strip with $\Int(\strip)=\bR\times(-1,1)$.
Consider two half strips 
\begin{align*}
A\, &= \, \strip \, \cap \, \bigl(\bR\times[-1,0]\bigr), &
B\, &= \, \strip \, \cap \, \bigl(\bR\times[0,1]\bigr).
\end{align*}
Then by Lemma~\ref{lm:half_strip_repl_by_model_half_strip} one can find two half strips $A'$ and $B'$ and foliated homeomorphisms $f:A \to A'$ and $g:B \to B'$ such that 
\begin{itemize}[leftmargin=2em]
\item 
$\bR\times(-1,0] \ \subset \ A'  \ \subset \ A \  \subset \ \bR\times[-1,0]$;
\item 
$\bR\times[0,1) \ \subset \ B'  \ \subset \ B \  \subset \ \bR\times[0,1]$;
\item 
the closures of boundary intervals of $\partial_{-} A'$ and $\partial_{+} B'$ are bounded in $\bR^2$ and mutually disjoint;
\item 
$f$ and $g$ are fixed on $\bR\times 0$ and preserve second coordinate.
\end{itemize}
Then $\strip' = A' \cup B'$ is a model strip with $\bR\times[0,1)  \subset  \strip'  \subset  \strip$,
and a foliated homeomorphism $\dif:\strip\to\strip'$ can be given by the formula: $\dif|_{A'} = f$ and $\dif|_{B'} = g$.
This proves Proposition~\ref{pr:strip_repl_by_model_strip} modulo Lemma~\ref{lm:half_strip_repl_by_model_half_strip}.

\begin{proof}[Proof of Lemma~\ref{lm:half_strip_repl_by_model_half_strip}]
a) First we will show how to make closures of boundary intervals of $\partial_{+}\strip'$ to be bounded though not necessarily disjoint.
Fix any $a<b \in \bR$ and consider the following half strip:
\[
T \ = \ \bR\times[0,1)  \ \cup \ (a,b) \times \{1\}.
\]
Then by~\cite[Lemma~3.2]{MaksymenkoPolulyakh:PGC:2015} there exists a homeomorphism $\dif:\bR\times[0,1] \to T$ preserving second coordinate and fixed on $\bR\times 0$.
Hence $\strip' = \dif(\strip)$ is a half strip with $\partial \strip' \subset \partial T = (a,b)\times 1$.

b) To simplify the notation replace $\strip$ with $\strip'$ and assume that boundary intervals of $\partial_{+}\strip'$ are bounded in $\bR^2$.
We should make their closures mutually disjoint.

Consider the following subset of $\bR^{7}$:
\[
A=\left\lbrace\, (a,u,b,c,v,d,t) \, \mid \, a<u<b, \ c<v<d, \ t\in [a,b] \, \right\rbrace
\]
and define the function $\gamma: A \rightarrow \bR$ by
\[
\gamma(a,u,b,c,v,d,t) =
\begin{cases}
\dfrac{t-a}{u-a}(v-c)+c, & t \in [a;u], \\ 
\dfrac{t-u}{b-u}(d-v)+v, \phantom{\dfrac{A^{A^A}}{A^A}} & t \in [u;b].	
\end{cases}
\]
Then $\gamma$ is continuous, and for any combination of the first six parameters $a,u,b,c,v,d$ the map $t \mapsto \gamma(a,u,b,c,v,d,t)$ homeomorphically maps the segment $[a,b]$ onto $[c,d]$ so that $u$ is sent to $v$.

\begin{sublemma}\label{lm:triangles}
Let  $T$ be a closed triangle in the plane $\bR^2$ with vertices $A(x_a, y_a)$, $B(x_b,y_b)$, $O(x_o,y_o)$ such that $y_a=y_b>y_o$, and  $C$ be a point on the open interval $(A,B)$.
Let also 
\begin{align*}
 T'&=T\setminus \left\lbrace A,B\right\rbrace, & 
 T''& =T \setminus \left\lbrace A \cup [B,C] \right\rbrace.
\end{align*}
Then there exists a homeomorphism $f: T' \rightarrow T''$ preserving second coordinate.
In particular, $f$ is fixed on the sides $(A,O]$, $[O,B)$ and maps the interval $(A,B)$ onto $(A,C)$.
\end{sublemma}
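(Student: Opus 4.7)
The plan is to construct $f$ by defining its restriction to each horizontal slice of $T$. For $y \in [y_o, y_a]$ let $L(y)$ and $R(y)$ be the $x$-coordinates of the points on $AO$ and $OB$ at height $y$ respectively, so that the slice $T \cap (\bR \times \{y\})$ is $[L(y), R(y)] \times \{y\}$, degenerate to $\{O\}$ at $y = y_o$. I will write $f(x, y) = (\phi_y(x), y)$, where $\phi_y$ is a self-homeomorphism of $[L(y), R(y)]$ fixing both endpoints for $y < y_a$, and at $y = y_a$ is a linear homeomorphism $(x_a, x_b) \to (x_a, x_c)$. The strategy is to interpolate continuously from the identity at some cutoff height $y_1 < y_a$ up to the target linear map at $y_a$, using the piecewise-linear interpolator $\gamma$ introduced above, and to absorb the "missing" piece $[C, B]$ at the top into a vanishing sliver of the slice that approaches the removed vertex $B$.

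Concretely, I would fix a cutoff $y_1 \in (y_o, y_a)$, set $\lambda = (x_c - x_a)/(x_b - x_a) \in (0,1)$, put $s(y) = 0$ for $y \le y_1$ and $s(y) = (y - y_1)/(y_a - y_1)$ for $y \in [y_1, y_a]$, and define
\[
M(y) = L(y) + s(y)\bigl(R(y) - L(y)\bigr), \qquad M'(y) = L(y) + \lambda\, s(y)\bigl(R(y) - L(y)\bigr).
\]
For $y < y_a$ put
\[
\phi_y(x) = \gamma\bigl(L(y),\, M(y),\, R(y),\, L(y),\, M'(y),\, R(y),\, x\bigr),
\]
which is the identity for $y \le y_1$ and, for $y \in (y_1, y_a)$, the piecewise linear self-map of $[L(y), R(y)]$ fixing the endpoints and sending $M(y)$ to $M'(y)$. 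For $y = y_a$, set $\phi_{y_a}(x) = x_a + \lambda(x - x_a)$, a linear homeomorphism $(x_a, x_b) \to (x_a, x_c)$.

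Most required properties are then immediate: $f$ preserves the second coordinate by construction; it is the identity on $(A, O] \cup [O, B)$ since $\phi_y$ fixes the endpoints $L(y)$ and $R(y)$ of the slice; and it is a bijection onto $T''$ because the slices of $T'$ and $T''$ coincide for $y < y_a$, while at $y = y_a$ the map $\phi_{y_a}$ sends $(A, B)$ bijectively onto $(A, C)$. The only nontrivial check is continuity at a point $(x^*, y_a) \in T'$ with $x^* \in (x_a, x_b)$. For this, the key observation is that as $y \to y_a^-$ one has $L(y) \to x_a$, $R(y) \to x_b$, $M(y) \to x_b$ and $M'(y) \to x_c$. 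Hence for any sequence $(x_n, y_n) \to (x^*, y_a)$ in $T'$ with $y_n < y_a$, the strict inequality $x^* < x_b$ forces $x_n < M(y_n)$ for large $n$, so $\phi_{y_n}(x_n)$ lies in the first linear piece $[L(y_n), M(y_n)] \to [L(y_n), M'(y_n)]$, whose parameters converge to those of $\phi_{y_a}$; this yields $\phi_{y_n}(x_n) \to \phi_{y_a}(x^*)$. Continuity of $f^{-1}$ follows by a symmetric argument using $M'(y) \to x_c$ in place of $M(y) \to x_b$.

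The main obstacle is precisely this continuity at the top edge, which is counter-intuitive because $f$ is pinned to the identity along the whole right side $[O, B)$, while the "right end" of the top slice must simultaneously land at $C$ rather than $B$. The resolution is that the narrow interval $[M(y), R(y)]$ collapses onto the single point $B$ as $y \to y_a$, and since $B$ belongs to neither $T'$ nor $T''$ this collapse is invisible from inside the domain: the apparent jump from "near $B$" to "near $C$" occurs only along sequences whose limits lie outside $T'$ and hence do not test the continuity of $f$.
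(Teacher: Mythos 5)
Your construction is essentially the same as the paper's: slice the triangle horizontally and apply the piecewise-linear interpolator $\gamma$ with a break-point curve $M(y)$ that converges to the vertex $B$ as $y \to y_a^-$, sent to a curve $M'(y)$ converging to $C$, so that the vanishing sliver $[M(y), R(y)]$ collapses onto the excised vertex $B$ while its image $[M'(y),R(y)]$ spreads out to cover $[C,B]$. The paper instead normalizes coordinates and uses the parabola $x=y^2$ running from $O$ to $B$ as its marker, but this is a cosmetic difference; your cutoff construction works equally well, and your discussion of why continuity holds at interior points of the top edge (and why the apparent jump near $B$ is invisible because $B \notin T'$) correctly identifies the heart of the argument, including the symmetric check for $f^{-1}$ using $M'(y)\to x_c$.

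One small formal repair is needed: for $y \le y_1$ you have $M(y) = M'(y) = L(y)$, so the $7$-tuple $(L(y), M(y), R(y), L(y), M'(y), R(y), x)$ lies outside the set $A$ on which $\gamma$ is defined (the strict inequalities $a<u<b$ and $c<v<d$ fail, and the first branch of $\gamma$ has a $0/0$), so the expression $\gamma(L(y), M(y), R(y), L(y), M'(y), R(y), x)$ is not literally meaningful on $[y_o,y_1]$. Simply declare $\phi_y = \id$ for $y \le y_1$ and invoke the $\gamma$-formula only for $y \in (y_1, y_a)$; your remark that the formula ``is the identity for $y\le y_1$'' already signals this intent, and continuity at $y=y_1$ from above holds because $M(y), M'(y) \to L(y_1)$ forces the piecewise-linear map to tend to the identity. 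With that cleanup the proof is complete.
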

\begin{proof}
Not loosing generality one can assume that 
$A(-1,0)$, $B(1,1)$, $O(0,0)$ and $C(0.5,1)$.
Then $f$ can be given by the formula:
\[
f(x,y)=
\begin{cases}
\left( \gamma(-y,y^2,y,-y,0,y,x), y\right), & 0 \leqslant y <1,\\
\left(\frac{x+1}{2}-1,1 \right), & y=1. 
\end{cases}
\]
Evidently, $f$ maps the curve $x=y^2$ on the segment of the line $x=0$.
Moreover, we have that $f[A_y, Q_y] = [A_y, C_y]$, and $f[Q_y,B_y]=[C_y, B_y]$, see 
Figure~\ref{fig:triangles},
\begin{figure}[h]
\center{\includegraphics[height=2.5cm]{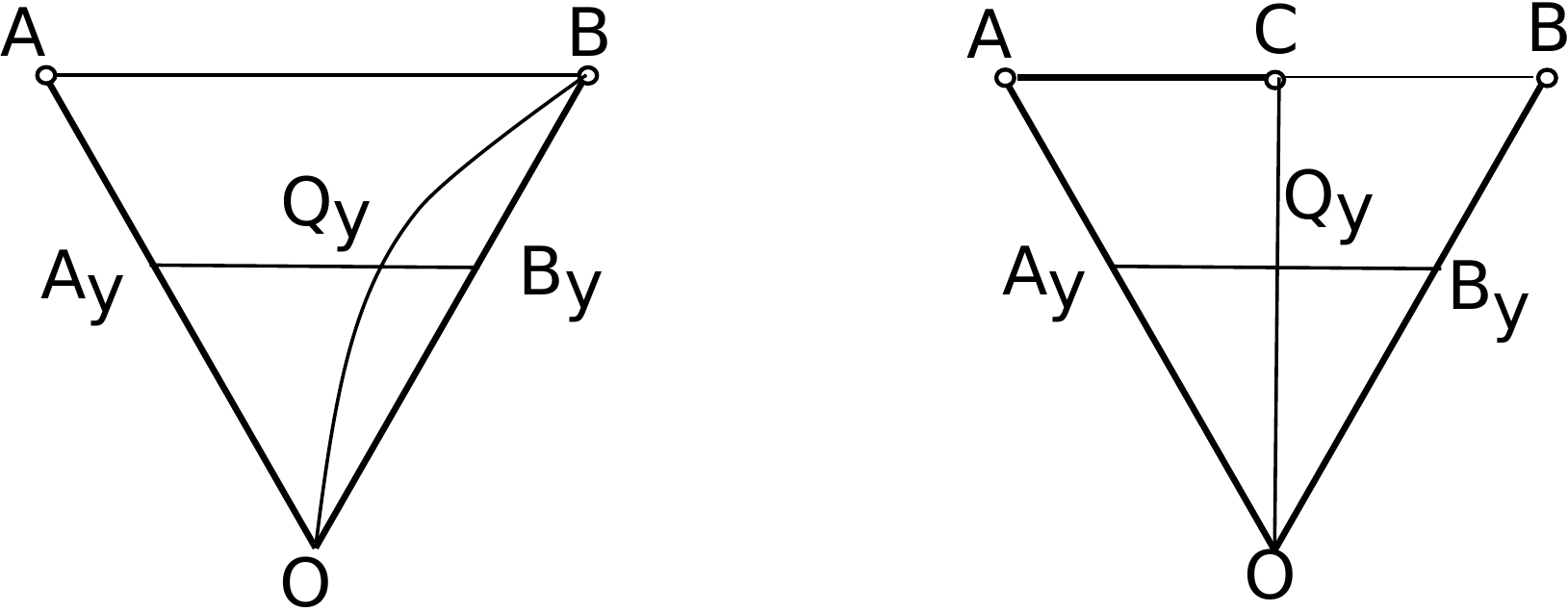}}
\caption{Triangles $T'$ and $T''$}
\label{fig:triangles}
\end{figure}
\end{proof}

Returning back to the proof of Lemma~\ref{lm:half_strip_repl_by_model_half_strip} assume that 
\[ \partial_{+} \strip = \bigsqcup\limits_{i=1}^{N} (a_i, b_i) \times \{1\},\]
where $N$ is either a finite number of $+\infty$.
Fix a strictly monotone sequence $\{\leveld{j}\}^{\infty}_{j=1} \subset [-1,1]$ such that $\lim\limits_{j \to \infty} = 1$ and for each $i$ define the triangle $T_i$ with vertices $A_i(a_i,1)$, $B_i(b_i,1)$, $O_i(\frac{a_i+b_i}{2},u_i)$, and let $C(\frac{a_i+b_i}{2},1)$.
Define also the following model half strips:
\[
\strip_i = \strip \ \setminus \ \bigcup_{j\leq i} \ [C_j,B_j) \times \{1\}, 
\]
and put
\[\strip_0 = \strip, \qquad 
\strip' = \strip \ \setminus \ \bigcup_{i=1}^{N} \ [C_i,B_i) \times \{1\} = \bigcap_{i=1}^{N} \strip_i. \]
Then the closures of the boundary intervals of $\strip'$ are mutually disjoint.

Denote $T'_i = T_i \setminus \{A_i, B_i\}$ and $T''_i = T_i \setminus \{A_i \cup [B_i, C_i]\}$.
Then by Lemma~\ref{lm:triangles} there exists a homeomorphism $f_i: T'_i \rightarrow T''_i$ preserving second coordinate and being identity on the sides $(A_i,O_i]$ and $[O_i,B_i)$.
Therefore $f_i$ extends by the identity to a homeomorphism $f_i: \strip_{i-1} \to \strip_{i}$.

Then a foliated homeomorphism $\varphi: \strip \rightarrow \strip^{'}$ can be defined as the composition of all $f_i$:
\[
\varphi = \cdots\circ f_{i+1} \circ f_{i} \circ\cdots \circ f_2 \circ f_{1}: 
\strip=\strip_0 \xrightarrow{f_1} \strip_1 \xrightarrow{f_2} 
\cdots  \xrightarrow{f_{i}} \strip_i \xrightarrow{f_{i+1}} \cdots \strip'.
\]
If $N$ is finite, $\varphi$ is well-defined.

For infinite $N$ one should check that for each point $z\in\strip$ the sequence 
\[ \bigl\{ \ g_j = f_j \circ \cdots \circ f_2 \circ f_{1}: \strip \to \strip_j \subset \strip \ \bigr\}_{j\in\bN}\] of embeddings ``stabilizes'' on some neighborhood $\Usp_z$ of $z$, that is $g_j = g_{j+1}$ on $\Usp$ for all sufficiently large $j$.

So, let $z=(x,y)\in\strip$.
If $y<1$, then there exists $i$ such that $y<u_i$.
Let $\Usp=\bR \times (0,u_i)$.
Then $f_j(\Usp) = \Usp$ for all $j$.
Moreover, $f_j$ is fixed on $\Usp$ for all $j>i$.
Therefore $g_j = g_i$ on $\Usp$ for all $j>i$.

Suppose $y=1$, so $(x,y) \in (a_i,b_i)\times \{1\}$ for some $i$, then each $f_j$ with $j\not=i$ is fixed on the triangle $T'_i$.
Then $\Usp = T'_i \setminus \bigl( (A_i,O_i] \cup [O_i,B_i) \bigr)$ is an open neighborhood of $(x,y)$ in $\strip$, and $g_j = g_i = f_i$ on $\Usp$ for $j>i$.
Thus $\varphi$ is a homeomorphism.
Lemma~\ref{lm:half_strip_repl_by_model_half_strip} is completed.
\end{proof}

\section{Proof of Theorem~\ref{th:homeomorphism_of_strips}}\label{sect:proof:th:homeomorphism_of_strips}
Let $\strip_1$ and $\strip_2$ be two strips and $\dif: \partial \strip_1 \to \partial \strip_2$ a monotone homeomorphism.
We should prove that $\dif$ extends to a foliated homeomorphism between $\strip_1$ and $\strip_2$.

If $\partial\strip_1$, and so $\partial \strip_2$, are empty, then any foliated homeomorphism between $\strip_1$ and $\strip_2$ can be regarded as an extension of $\dif$.
Therefore we will suppose that $\partial\strip_1\not=\varnothing$.
Not loosing generality one can also assume that $\dif(\partial_{-}\strip_1)=\partial_{-}\strip_2$, $\dif(\partial_{+}\strip_1)=\partial_{+}\strip_2$, and the restrictions $\dif|_{\partial_{-}\strip_1}$ and $\dif|_{\partial_{+}\strip_1}$ are increasing.

\subsection*{Case 1.} 
Suppose that both $\strip_1$ and $\strip_2$ are half strips such that
\begin{gather*}
\partial_{+} \strip_1 = \bigsqcup_{i=1}^{K} \bdX_{i}  \times \{1\}, 
\qquad 
\partial_{+} \strip_2 = \bigsqcup_{i=1}^{K} \bdX'_{i}  \times \{1\}, \\
\partial_{-} \strip_1=\partial_{-} \strip_2=\bR\times\{0\},
\end{gather*}
where $K \in \{0,1,\ldots, +\infty\}$, each $\bdX_{i}\times\{1\}$ is a boundary interval of $\partial_{+}\strip_1$, $\bdX'_{i}$ is a boundary interval of $\partial_{+}\strip_2$, the closures $\overline{\bdX_{i}}$ and $\overline{\bdX'_{i}}$ are bounded, 
\begin{align}\label{equ:case1:half_strips_disjoint_closures}
\overline{\bdX_{i}} \cap \overline{\bdX_{j}} &= \varnothing, &
\overline{\bdX'_{i}} \cap \overline{\bdX'_{j}} &= \varnothing,
\end{align}
for all $i\not= j$, and $\dif(\bdX_{i}\times\{1\}) = \bdX'_{i}\times\{1\}$.

We will extend $\dif$ to a homeomorphism $\dif: \strip_1 \to \strip_2$ preserving second coordinate and fixed on $\bR\times 0$.

Fix an arbitrary strictly increasing sequence $\{\leveld{j}\}^{\infty}_{j=0} \subset [0,1)$ such that $\leveld{0} = 0$ and $\lim\limits_{j\to\infty} \leveld{j} = 1$.
For each $\leveld{j}=0,1,\ldots,\infty$ we will now construct a homeomorphism $\psi_j: \bR \rightarrow \bR$ by the following rule.

Since $\dif(\bR\times0) = \bR\times0$ one can write $\dif(x,0) = (\psi_0(x),0)$ for a unique homeomorphism $\psi_0:\bR\to\bR$.

Further notice that there exists a unique homeomorphism 
\[\overline{h}: \bigsqcup\limits_{i=1}^{K} \bdX_i \to \bigsqcup\limits_{i=1}^{K} \bdX'_i\]
such that 
$\overline{h}(\bdX_{i}) = \bdX'_{i}$.
Then for $j\geq1$ define $\psi_j: \bigsqcup\limits_{i=1}^{j} X_i \to \bigsqcup\limits_{i=1}^{j} X'_i$ by $\psi_j(x) = \overline{h}(x)$.
The assumption that $\dif$ preserves the order of boundary intervals means that $\bdX_i < \bdX_j$ if and only if $\bdX^{'}_i < \bdX^{'}_j$, $i,j \in 1,\ldots, K$.
Hence one can apply the following Lemma~\ref{lem:homeomorphism_of_line} to extend $\psi_j$ to a homeomorphism $\psi_j: \bR \rightarrow \bR$.

\begin{lemma}\label{lem:homeomorphism_of_line}
Let $\alpha=\{\bdX_{i}\}^{n}_{i=1}$ and $\beta= \{\bdX'_{i} \}^{n}_{i=1}$ be two families of \myemph{open} segments in $\bR$ having the following properties:
\begin{enumerate}[label={\rm(\arabic*)}, leftmargin=*]
\item\label{enum:lem: homeomorphism of line:disjoint}
the closures $\overline{\bdX_{i}}$ and $\overline{\bdX'_{i}}$ are bounded, and $\overline{\bdX_{i}} \cap \overline{\bdX_{j}} = \overline{\bdX'_{i}} \cap \overline{\bdX'_{j}} =\emptyset$ for all $i\not=j =1,\ldots, n$;
\item\label{enum:lem: homeomorphism of line:similarly_ordered}
$\alpha$ and $\beta$ are ``similarly ordered'', that is $\bdX_i < \bdX_j$ if and only if $\bdX'_i < \bdX'_j$ for all $i\not=j =1,\ldots, n$.
\end{enumerate}
Suppose also that for each $i=1,\ldots,n$ we have an orientation preserving homeomorphism $\psi_i: \bdX_i \rightarrow \bdX'_i$.
Then there is a homeomorphism $\psi: \bR \rightarrow \bR$ such that $\psi|_{\bdX_i }=\psi_{i}$.
\end{lemma}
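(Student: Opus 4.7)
The plan is to exploit the fact that the closures of the intervals in $\alpha$ are pairwise disjoint and bounded in order to split $\mathbb{R}$ into finitely many pieces on which the extension can be defined separately and then glued.

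First, using condition~\ref{enum:lem: homeomorphism of line:similarly_ordered}, I would relabel the intervals so that $\bdX_1<\bdX_2<\cdots<\bdX_n$, which by condition~\ref{enum:lem: homeomorphism of line:similarly_ordered} automatically gives $\bdX'_1<\bdX'_2<\cdots<\bdX'_n$ as well. Writing $\bdX_i=(a_i,b_i)$ and $\bdX'_i=(a'_i,b'_i)$, condition~\ref{enum:lem: homeomorphism of line:disjoint} gives $-\infty<a_1<b_1<a_2<\cdots<b_n<+\infty$, and similarly for the primed endpoints. Thus $\mathbb{R}$ decomposes as the disjoint union of the open intervals $\bdX_i$ together with the closed ``gap'' intervals $L_0=(-\infty,a_1]$, $L_i=[b_i,a_{i+1}]$ for $i=1,\dots,n-1$, and $L_n=[b_n,+\infty)$, and analogously for the primed side.

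Next, since each $\psi_i\colon\bdX_i\to\bdX'_i$ is an orientation preserving homeomorphism between bounded open intervals, it extends uniquely to a homeomorphism $\bar{\psi}_i\colon[a_i,b_i]\to[a'_i,b'_i]$ with $\bar{\psi}_i(a_i)=a'_i$ and $\bar{\psi}_i(b_i)=b'_i$. On each bounded gap $L_i=[b_i,a_{i+1}]$ ($1\le i\le n-1$) I define the extension by the unique orientation preserving affine map $[b_i,a_{i+1}]\to[b'_i,a'_{i+1}]$; on $L_0=(-\infty,a_1]$ I use the translation $x\mapsto x+(a'_1-a_1)$, and on $L_n=[b_n,+\infty)$ the translation $x\mapsto x+(b'_n-b_n)$. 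Call the resulting map $\psi\colon\mathbb{R}\to\mathbb{R}$, defined piecewise.

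Finally, it remains to verify that $\psi$ is a homeomorphism. By construction $\psi|_{\bdX_i}=\psi_i$, the pieces agree at the shared endpoints $a_i$ and $b_i$ (each endpoint receives the same value from the interior piece and from the adjacent gap piece), and the pieces are orientation preserving homeomorphisms onto their respective target pieces. Hence $\psi$ is continuous, strictly increasing, and surjective onto $\mathbb{R}$, so it is a homeomorphism. There is no real obstacle in this argument; the only point to be careful about is matching the endpoint values when gluing the pieces, which is guaranteed by the orientation preservation of each $\psi_i$ together with the compatible ordering coming from conditions~\ref{enum:lem: homeomorphism of line:disjoint} and~\ref{enum:lem: homeomorphism of line:similarly_ordered}.
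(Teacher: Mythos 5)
Your proof is correct and takes essentially the same route as the paper: relabel the intervals in increasing order, define $\psi$ by $\psi_i$ on each $\bdX_i$, an affine map on each bounded gap, and translations on the two unbounded ends. The only minor difference is that you make the continuity at the endpoints explicit by first extending each $\psi_i$ to the closed interval, whereas the paper leaves this matching implicit; the construction is otherwise identical.
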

\begin{proof}
Due to assumptions on $\alpha$ and $\beta$ one can renumber the elements in these families and assume that $\bdX_{i}= (a_i,b_i)$ and $\bdX_{i}^{'}= (c_i,d_i)$ for some $a_i,b_i,c_i,b_i\in\bR$ such that
\begin{align*}
&a_1< b_1< a_2 < b_2 < \ldots <a_n< b_n, & \ \
&c_1< d_1< c_2 < d_2 < \ldots <c_n< d_n.
\end{align*}
Then the homeomorphism $\psi$ can be given by the formula:
\[
\psi(x) =
\begin{cases}
x-a_1+ c_1, & x \in (-\infty, a_1],\\
\psi_i(x), &  x \in (a_i,b_{i}), \ i=1,\ldots,n,\\
\frac{c_{i+1}-d_i}{a_{i+1}-b_i}(x-b_i)+d_i, & x \in [b_i,a_{i+1}], \ i=1,\ldots,n-1, \\
x-b_n+ d_n, & x \in [b_n, +\infty).
\end{cases}
\]
Lemma~\ref{lem:homeomorphism_of_line} is proved.
\end{proof}

Now define $\dif:\strip_1\to\strip_2$ by the formula:
\begin{equation}\label{equ:formula_for_h}
\dif(x,y)=
\begin{cases}
\bigl( \psi_j(x), y\bigr), & y=u_j, \\ & 0\leq j \leq K, \\
\bigl( \varepsilon_j(y) \psi_j(x)+(1-\varepsilon_j(y))\psi_{j+1}(x), \ y \bigr), & y \in (u_j,u_{j+1}), \\ & 0 \leq j \leq K-1,\\
\end{cases}
\end{equation}
where $\varepsilon_j(y)=\frac{u_{j+1}-y}{u_{j+1}-u_j}$.

Evidently, $\dif$ is bijective, preserves the second coordinate and homeomorphically maps $\strip_{1} \setminus \partial_{+} \strip_{1}$ onto $\strip_{2} \setminus \partial_{+} \strip_{2}$.

It remains to check that $\dif$ is a homeomorphism.
Since $\psi_j = \psi_{j+1} = \overline{h}$ on $\bdX_i$ for $i\leq j$, it follows from the second line in~\eqref{equ:formula_for_h} that
\[
\dif(x,y)=(\overline{\dif}(x),y).
\]
for all $(x,y) \in \bdX_i \times (\leveld{i}, 1]$.
Therefore $\dif$ homeomorphically maps the open set $\bdX_i \times (\leveld{i}, 1]$ of $\strip_1$ onto the open set $\bdX'_i \times (\leveld{i}, 1]$ of $\strip_2$.
Since the family 
\[
\{ \bdX_i \times (\leveld{i}, 1] \}_{i=1}^{K} \ \cup \ \{  \strip_{1} \setminus \partial_{+} \strip_{1} \}
\]
constitutes an open covering of $\strip_1$, it follows that $\dif$ is a homeomorphism.

\subsection*{Case 2.}
Suppose $\strip_1$ and $\strip_2$ are arbitrary half strips.

One can assume that $\bR\times[0,1) \subset \strip_i \subset \bR\times[0,1]$, $i=1,2$.
Then by Lemma~\ref{lm:half_strip_repl_by_model_half_strip} one can find a half strip $\strip'_i$ and a homeomorphism $\phi_i:\strip_i \to \strip'_i$ such that  
\begin{itemize}
\item 
$\bR\times[0,1) \subset \strip'_i \subset \strip_i \subset \bR\times[0,1]$,
\item 
the closures of boundary intervals in $\partial_{+}\strip'_i$ are bounded and mutually disjoint;
\item 
$\phi_i$ is fixed on $\bR\times 0$ and preserves the second coordinate.
\end{itemize}
Hence the composition $\dif' = \phi_2 \circ \dif \circ \phi_1^{-1}: \partial\strip'_1 \to \partial\strip'_2$ is a homeomorphism preserving order and orientations of boundary intervals and coincides with $\dif$ on $\bR\times 0$.
Therefore, by Case~1, it extends to a foliated homeomorphism $\dif':\strip'_1 \to \strip'_2$.
Hence $\phi_2^{-1} \circ \dif' \circ \phi_1:\strip_1 \to \strip_2$ is the required extension of $\dif$.

\subsection*{Case 3.}
Consider the general case when $\strip_1$ and $\strip_2$ are strips.
Not loosing generality one can assume that 
\[
\bR\times(-1,1) \subset \strip_i \subset \bR\times[-1,1]
\]
for $i=1,2$.
Similarly to the proof of Proposition~\ref{pr:strip_repl_by_model_strip} consider two half strips 
\begin{align*}
 A_i &= \strip_i \cap \bR\times[-1,0], & B_i &= \strip_i \cap \bR\times[0,1]. 
\end{align*}
Evidently,
\begin{align*}
\partial_{-} A_i &= \partial_{-} \strip_i,  &
\partial_{+} A_i = \partial_{-} B_i &= \bR\times 0, &
\partial_{+} B_i &= \partial_{+} \strip_i.
\end{align*}
Define two homeomorphisms $f:\partial A_1 \to \partial A_2$  and $g:\partial B_1 \to \partial B_2$ by the rule:
\begin{align*}
f|_{\partial_{-} A_1} &= \dif|_{\partial_{-} A_1}, &
f|_{\partial_{+} A_1} &= g|_{\partial_{-} B_1} = \id_{\bR\times 0}, &
f|_{\partial_{+} B_1} &= \dif|_{\partial_{+} \strip_1}.
\end{align*}
Then, by Case~2, $f$ and $g$ extend to foliated homeomorphisms $f:A_1\to A_2$ and $g:B_1\to B_2$.
Hence a required extension $\dif:\strip_1\to\strip_2$ of $\dif$ can be given by the formula: $\dif|_{A'} = f$ and $\dif|_{B'} = g$.
Theorem~\ref{th:homeomorphism_of_strips} is completed.
\qed

\section{Striped atlas}\label{sect:stripped_atlas}
Let $\stripSurf$ be a two-dimensional topological manifold.
\begin{definition}
A \myemph{striped atlas} on $\stripSurf$ is a map $\qmap: \preStripSurf \to \stripSurf$ having the following properties:
\begin{enumerate}[leftmargin=*, label=(\arabic*)]
\item 
$\preStripSurf = \bigsqcup \limits_{\stInd \in \StInd} \strip_{\stInd}$ is at most countable family of mutually disjoint strips;

\item  
$\qmap$ is a \myemph{quotient} map, which means that it is continuous, surjective, and has the property that a subset $\Usp\subset\stripSurf$ is open if and only if $\qmap^{-1}(\Usp) \cap \strip_{\stInd}$ is open in $\strip_{\stInd}$ for each $\stInd\in\StInd$;

\item 
there exist two disjoint families $\mathcal{X} = \{\bdX_\bdGlueInd\}_{\bdGlueInd\in\BdGlueInd}$ and $\mathcal{Y} = \{\bdY_\bdGlueInd\}_{\bdGlueInd\in\BdGlueInd}$ of mutually disjoint boundary intervals of $\preStripSurf$ enumerated by the same set of indexes $\BdGlueInd$ such that 
\begin{enumerate}[leftmargin=*, label=(\alph*)]
\item
$\qmap$ is injective on $\preStripSurf \setminus (\mathcal{X} \cup \mathcal{Y})$;
\item
$\qmap(\bdX_\bdGlueInd) = \qmap(\bdY_\bdGlueInd)$ for each $\bdGlueInd\in\BdGlueInd$;
\item\label{enum:striped_atlas:XY:embeddings}
the restrictions $\qmap|_{\bdX_\bdGlueInd}: \bdX_\bdGlueInd \to \qmap(\bdX_\bdGlueInd)$ and 
$\qmap|_{\bdY_\bdGlueInd}: \bdY_\bdGlueInd \to \qmap(\bdY_\bdGlueInd)$ are embeddings with closed images;
\end{enumerate}
\end{enumerate}
\end{definition}

\begin{definition}
A surface $\stripSurf$ admitting a striped atlas will be called a \myemph{striped} surface.
\end{definition}

Notice that a striped surface $\stripSurf$ is a non-compact two-dimensional manifold which can be non-connected and non-orientable, and each of its boundary component is an open interval.

Moreover, $\stripSurf$ admits a one-dimensional foliation obtained from canonical foliations on the corresponding model strips $S_{\lambda}$.
We will call it the \myemph{canonical} foliation associated to the striped atlas $\qmap$ and denote by $\Partition$.
Evidently, each leaf of $\Partition$ is a homeomorphic image of $\bR$ and is also a closed subset of $\stripSurf$.

\begin{definition}\label{def:agreeing_atlas_and_foliation}
We will say that a foliated surface $(\stripSurf,\Partition)$ admits a \myemph{striped structure} if there exists a striped atlas $\qmap:\preStripSurf\to\stripSurf$ which maps each leaf of the canonical foliation of each strip in $\preStripSurf$ onto some leaf of $\Partition$.
\end{definition}

\begin{remark}
Due to~\ref{enum:striped_atlas:XY:embeddings} for each $\bdGlueInd \in \BdGlueInd$ one get the following ``gluing'' homeomorphism $\phi_{\bdGlueInd}: \bdY_{\bdGlueInd} \to \bdX_{\bdGlueInd}$ defined by 
\begin{equation}\label{equ:gluing_map}
\phi_{\bdGlueInd} = \bigl(\qmap|_{\bdX_{\bdGlueInd}}\bigr)^{-1} \circ  \qmap|_{\bdY_{\bdGlueInd}}.
\end{equation}
Therefore one can think that a striped surface is obtained from a family of model strips by gluing them along certain boundary intervals by homeomorphisms $\phi_{\bdGlueInd}$.
It is allowed that two strips are glued along more than one pair of boundary components.
Moreover, one may glue together boundary components of the same strip $\strip$.
\end{remark}

\begin{definition}\label{def:strip_surf_equiv}
Two striped atlases $\qmap : \preStripSurf \to \stripSurf$ and $\qmap' : \preStripSurf' \to \stripSurf'$ on striped surfaces $\stripSurf$ and $\stripSurf'$ will be called \myemph{equivalent} if there exist two foliated homeomorphisms $h : \preStripSurf \to \preStripSurf'$ and $k : \stripSurf \to \stripSurf'$ making commutative the following diagram:
\begin{equation}\label{eq:strip_surf_equiv}
\begin{CD}
\preStripSurf @>{h}>> \preStripSurf'\\
@V{\qmap}VV @VV{\qmap'}V \\
\stripSurf @>>{k}> \stripSurf'
\end{CD}
\end{equation}
\end{definition}

Turning back to the definition of a striped surface notice that for each $\bdGlueInd \in \BdGlueInd$ the intervals $\bdX_{\bdGlueInd}, \bdY_{\bdGlueInd}$ are horizontal, and so
\begin{align*}
\bdX_{\bdGlueInd} &=(a,b) \times \{x_\bdGlueInd\}, &
\bdY_{\bdGlueInd} &= (c,d) \times \{y_\bdGlueInd\}
\end{align*}
for some $x_\bdGlueInd, y_\bdGlueInd \in \{u, v\}$ and $a,b, c,d \in \bR\cup \{\pm\infty\}$ with $a<b$ and $c<d$.
Hence $\phi_\bdGlueInd : \bdY_\bdGlueInd \to \bdX_\bdGlueInd$ can be written as follows:
\begin{equation}\label{eq:gluing_map_coord}
\phi_\bdGlueInd(s, y_\bdGlueInd) = (\psi_\bdGlueInd(s), x_\bdGlueInd), \quad s\in (c,d),
\end{equation}
where $\psi_\bdGlueInd:(c,d)\to(a,b)$ is a certain homeomorphism.

\begin{remark}\label{rem:affine_gluing}
Notice that if $a<b$ and $c<d$, then there exist exactly two \textit{affine} homeomorphisms $\psi^{+}, \psi^{-}: (c,d) \to (a,b)$ given by 
\begin{align}\label{eq:affine_gluing}
\psi^{+}(t)&= \frac{b-a}{d-c}\bigl(t - c \bigr) + a,
&
\psi^{-}(t)&= \frac{a-b}{d-c}\bigl(t - c \bigr) + b,
\end{align}
for $t\in(a,b)$.
Evidently, $\psi^{+}$ preserves the orientation and $\psi^{-}$ reverses it.
\end{remark}

\begin{definition}
A striped atlas $\qmap:\preStripSurf\to\stripSurf$ will be called \emph{affine} if the following two conditions hold:
\begin{enumerate}[label=$(\alph*)$, leftmargin=*]
\item 
$\preStripSurf$ consists of model strips only;
\item 
each gluing map $\phi_{\bdGlueInd}: \bdY_{\bdGlueInd} \to \bdX_{\bdGlueInd}$, $\bdGlueInd \in \BdGlueInd$, is \emph{affine}, that is the homeomorphism $\psi_\bdGlueInd$ in~\eqref{eq:gluing_map_coord} is given by either of the formulas from~\eqref{eq:affine_gluing}.
\end{enumerate}
\end{definition}

\begin{theorem}\label{th:all_striped_surf_are_affine}
Each striped atlas $\qmap:\preStripSurf=\bigsqcup \limits_{\stInd \in \StInd} \strip_{\stInd}\to\stripSurf$ on a striped surface $\stripSurf$ is equivalent to an affine one.
Moreover, if $\preStripSurf$ consists of model strips only, then there exists a foliated homeomorphism $h:\preStripSurf\to\preStripSurf$ such that the composition $\qmap' = \qmap\circ\dif:\preStripSurf\to\stripSurf$ is an affine atlas.
\end{theorem}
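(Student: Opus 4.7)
The plan is to attack the theorem in two steps: first reduce to the case where $\preStripSurf$ consists of model strips using Proposition~\ref{pr:strip_repl_by_model_strip}, and then construct the required foliated self-homeomorphism strip by strip by prescribing its boundary values and extending via Theorem~\ref{th:homeomorphism_of_strips}. For the reduction, apply Proposition~\ref{pr:strip_repl_by_model_strip} to each $\strip_\stInd$ separately and take the disjoint union to obtain a foliated homeomorphism $g:\preStripSurf\to\preStripSurf''$ onto a disjoint union of model strips. Then $\qmap\circ g^{-1}:\preStripSurf''\to\stripSurf$ is a striped atlas on $\stripSurf$ equivalent to $\qmap$ (take $k=\id_\stripSurf$ in Definition~\ref{def:strip_surf_equiv}), so it suffices to prove the ``moreover'' statement; assume henceforth that every $\strip_\stInd$ is already a model strip.

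I will construct $\dif=\bigsqcup_\stInd \dif_\stInd$ with each $\dif_\stInd:\strip_\stInd\to\strip_\stInd$ a foliated self-homeomorphism. Because each $\strip_\stInd$ is a model strip, its boundary intervals have mutually disjoint closures, and so I may freely specify $\dif_\stInd|_{\partial\strip_\stInd}$ interval by interval. On every boundary interval of $\strip_\stInd$ that is \emph{not} of the form $\bdY_\bdGlueInd$ (this covers both the intervals $\bdX_\bdGlueInd$ and those not glued at all), set $\dif_\stInd=\id$. On an interval of the form $I=\bdY_\bdGlueInd$, set
\[
\dif_\stInd|_I \ = \ \phi_\bdGlueInd^{-1}\circ\psi_\bdGlueInd,
\]
where $\psi_\bdGlueInd:\bdY_\bdGlueInd\to\bdX_\bdGlueInd$ is the affine homeomorphism from~\eqref{eq:affine_gluing} whose orientation agrees with that of the gluing map $\phi_\bdGlueInd$. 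Since $\phi_\bdGlueInd^{-1}$ has the same orientation as $\phi_\bdGlueInd$, the composition $\phi_\bdGlueInd^{-1}\circ\psi_\bdGlueInd$ is an orientation-preserving self-homeomorphism of $I$. Consequently every $\dif_\stInd|_I$ is an orientation-preserving self-map fixing $I$ as a set, so $\dif_\stInd|_{\partial\strip_\stInd}$ preserves each side $\partial_\pm\strip_\stInd$ together with the linear order of its boundary intervals, and is therefore monotone in the sense of Section~\ref{sect:main_results}. Theorem~\ref{th:homeomorphism_of_strips} then supplies the desired foliated extension $\dif_\stInd$.

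It remains to verify that $\qmap\circ\dif$ is affine. For every $\bdGlueInd\in\BdGlueInd$ the new gluing map is computed as
\[
\phi'_\bdGlueInd \ = \ \bigl(\qmap|_{\bdX_\bdGlueInd}\circ\dif|_{\bdX_\bdGlueInd}\bigr)^{-1}\circ\bigl(\qmap|_{\bdY_\bdGlueInd}\circ\dif|_{\bdY_\bdGlueInd}\bigr) \ = \ \phi_\bdGlueInd\circ\phi_\bdGlueInd^{-1}\circ\psi_\bdGlueInd \ = \ \psi_\bdGlueInd,
\]
which is affine by construction. Combined with the reduction, this proves both parts of the theorem. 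The main obstacle is the orientation bookkeeping: the sign in~\eqref{eq:affine_gluing} must be chosen so that $\psi_\bdGlueInd$ has the same orientation as $\phi_\bdGlueInd$; otherwise $\dif_\stInd|_{\bdY_\bdGlueInd}$ would reverse orientation, and once a single strip carries several glued boundary intervals the combined map $\dif_\stInd|_{\partial\strip_\stInd}$ would cease to be globally order-preserving or order-reversing, so Theorem~\ref{th:homeomorphism_of_strips} could no longer be applied.
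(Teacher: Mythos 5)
Your proposal is correct and follows essentially the same route as the paper: reduce to model strips via Proposition~\ref{pr:strip_repl_by_model_strip}, then fix the gluing maps by a boundary-prescribed self-homeomorphism extended with Theorem~\ref{th:homeomorphism_of_strips}. The only difference is cosmetic: you place the correcting map $\phi_\bdGlueInd^{-1}\circ\psi_\bdGlueInd$ on the intervals $\bdY_\bdGlueInd$ (identity on $\bdX_\bdGlueInd$), while the paper places $\phi_{\bdGlueInd}\circ\sigma_{\bdGlueInd}^{-1}$ on $\bdX_\bdGlueInd$ (identity on $\bdY_\bdGlueInd$); both yield the same affine gluing maps.
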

\begin{proof}
\newcommand\aff{\sigma}
First we show that $\qmap$ is equivalent to an atlas consisting of model strips only.
By Proposition~\ref{pr:strip_repl_by_model_strip} for every strip $\strip_{\stInd}$ there exists a model strip $\strip'_{\stInd} \subset \strip_{\stInd}$ and a foliated homeomorphism $\dif_{\stInd}:\strip'_{\stInd}\to \strip_{\stInd}$.
Put $\preStripSurf' = \bigsqcup \limits_{\stInd \in \StInd} \strip'_{\stInd}$ and define a homeomorphism $\dif:\preStripSurf'\to \preStripSurf$ by $\dif|_{\strip'_{\stInd}} = \dif_{\stInd}$, $\stInd\in\StInd$.
Then $\qmap' = \qmap \circ\dif:\preStripSurf' \to \stripSurf$ is an atlas on $\stripSurf$ consisting of model strips and the pair $(\dif, \id_{\stripSurf})$ is an equivalence between $\qmap'$ and $\qmap$.

\medskip

Assume now that each strip $\strip_{\stInd}$ in $\preStripSurf$ is model.
For each $\bdGlueInd \in \BdGlueInd$ let $\aff_{\bdGlueInd}:\bdY_{\bdGlueInd} \to \bdX_{\bdGlueInd}$ be a unique affine homeomorphism preserving or reversing orientation mutually with $\phi_{\bdGlueInd}$.

Let $\strip_{\stInd}$, $\stInd \in \StInd$, be a model strip from $\preStripSurf$ and $\partial\strip_{\stInd}=\mathop{\sqcup}\limits_{\aind\in \Aind} I_{\aind}$ be the family of its boundary intervals.
We will now define a certain homeomorphism $\dif_{\stInd}: \partial\strip_{\stInd} \to \partial \strip_{\stInd}$ preserving each $I_{\aind}$ with its orientation.
If $I_{\aind} = \bdX_{\bdGlueInd}$ for some $\bdGlueInd \in \BdGlueInd$, then we set 
\[
\dif_{\stInd} = \phi_{\bdGlueInd} \circ \sigma_{\bdGlueInd}^{-1}: \bdX_{\bdGlueInd} \to \bdX_{\bdGlueInd},
\]
otherwise put $\dif_{\stInd}$ to be the identity map $\id_{I_{\aind}}$.

Then $\dif_{\stInd}$ satisfies assumptions of Theorem~\ref{th:homeomorphism_of_strips} and therefore extends to a foliated homeomorphism $\dif_{\stInd}: \strip_{\stInd} \to \strip_{\stInd}$.
Hence we get a homeomorphism $\dif:\preStripSurf \to \preStripSurf$ defined by $\dif|_{\strip_{\stInd}} = \dif_{\stInd}$.

Then one easily checks that the map $\qmap' = \qmap\circ\dif: \preStripSurf \to \stripSurf$ is a striped atlas for $\stripSurf$.
Moreover, $\qmap'$ glues the same strips along the same boundary intervals and in the same directions as $\qmap$, but its gluing maps
\[
\phi'_{\bdGlueInd} = \bigl(\qmap'|_{\bdX_{\bdGlueInd}}\bigr)^{-1} \circ  \qmap'|_{\bdY_{\bdGlueInd}}: \bdY_{\bdGlueInd} \to \bdX_{\bdGlueInd}
\]
differs from the ones of $\qmap$.
It follows from the following commutative diagram:
\[
\xymatrix{
\bdY_{\bdGlueInd} 
  \ar[rrrrr]_(.4){\dif|_{\bdY_{\bdGlueInd}} = \id_{\bdY_{\bdGlueInd}} } 
  \ar[d]_{\phi'_{\bdGlueInd}}
  \ar@/^1pc/[rrrrrrr]^{\qmap'|_{\bdY_\bdGlueInd}}
   &&&&&
\bdY_{\bdGlueInd}
  \ar[rr]_{\qmap|_{\bdY_{\bdGlueInd}}} 
  \ar[d]^{\phi_{\bdGlueInd} } 
  \ar[dlllll]^(.3){\sigma_{\bdGlueInd} } 
  &&
\qmap(\bdY_{\bdGlueInd}) 
  \ar@{=}[d] \\
\bdX_{\bdGlueInd} 
  \ar@/_1pc/[rrrrrrr]_{\qmap'|_{\bdX_\bdGlueInd}}
  \ar[rrrrr]^(.7){\dif|_{\bdX_{\bdGlueInd}} = \phi_{\bdGlueInd} \circ\sigma^{-1}_{\bdGlueInd} } &&&&&
\bdX_{\bdGlueInd} 
  \ar[rr]^{\qmap|_{\bdX_\bdGlueInd}} &&
\qmap(\bdX_{\bdGlueInd}) 
}
\]
that $\phi'_{\bdGlueInd} = \sigma_{\bdGlueInd}$ is affine.
Hence $\qmap'$ is an affine striped atlas for $\stripSurf$.
\end{proof}

\section{Graph of a striped atlas}\label{sect:graph_of_a_striped_atlas}
Let $\qmap: \mathop{\sqcup}\limits_{\stInd\in\StInd}\strip_{\stInd} \to \stripSurf$ be a striped atlas on $\stripSurf$.
We will now associate to $\qmap$ a certain graph $\Gr$ which encodes a ``combinatorial'' information about gluing strips via $\qmap$.
It was firstly considered in~\cite{Soroka:MFAT:2016} for a special class of ``rooted tree like'' striped surfaces.
That graph may have multiple edges and loops and also half-open edges.
\begin{enumerate}[leftmargin=*]
\item 
The \textit{vertices} of $\Gr$ are strips of $\mathop{\sqcup}\limits_{\stInd\in\StInd}\strip_{\stInd}$.

\item 
It will be convenient to call each boundary interval $\bdX$ of some strip $\strip_{\stInd}$ a \textit{half-edge incident to the vertex $\strip_{\stInd}$}.
The set of all half edges of $\partial_{\pm}\strip_{\stInd}$ will be denoted by $d_{\pm}(\strip_{\stInd})$.
We also put $d(\strip_{\stInd})=d_{-}(\strip_{\stInd}) \cup d_{+}(\strip_{\stInd})$.

\item
The \textit{edges} of $\Gr$ are of the following two types.
\begin{enumerate}[leftmargin=*]
\item 
If two strips $\strip_1$ and $\strip_2$ are glued along their boundary intervals $\bdX_\bdGlueInd$ and $\bdY_\bdGlueInd$, then we assume that the \textit{vertices} $\strip_1$ and $\strip_2$ of $\Gr$ are connected by an edge $e_{\bdGlueInd}$.
Thus formally, an edge $e_{\bdGlueInd}$ is an unordered pair of half edges $(\bdX_\bdGlueInd, \bdY_\bdGlueInd)$ and will be called a \textit{closed} edge of $\Gr$.

\item
If $\bdX$ is a boundary interval of some strip $\strip_{\stInd}$ which is not glued to any other interval, so it represents a boundary interval of $\stripSurf$, then we assume that $\bdX$ is a half-open edge with one vertex $\strip_{\stInd}$.
\end{enumerate}
\end{enumerate}

We also add to $\Gr$ the information about directions of gluing boundary intervals, and the disposition of boundary intervals along each strip.

For a homeomorphism $f:(a,b) \to (c,d)$ define a number $\ori(f) = +1$ if $f$ preserves orientation and $\ori(f) = -1$ otherwise.
It is evident, that if $g:(c,d) \to (e,f)$ is another homeomorphism, then $\ori(g\circ f) = \ori(g)\cdot \ori(f)$.

\begin{enumerate}[leftmargin=*, resume]
\item
To each closed edge $(\bdX_\bdGlueInd, \bdY_\bdGlueInd)$ corresponding to the gluing of boundary components $\phi_{\bdGlueInd}:\bdY_\bdGlueInd \to \bdX_\bdGlueInd$ we associate the number $\sigma(\bdX_\bdGlueInd, \bdY_\bdGlueInd):=\ori(\phi_{\bdGlueInd})$ and call it the \textit{orientation of gluing}.

\item
Recall that for each strip $\strip_{\stInd}$ the set of its boundary intervals is at most countable partially ordered set being a disjoint union of two linearly ordered subsets corresponding to $\partial_{-}\strip_{\stInd}$ and $\partial_{+}\strip_{\stInd}$ respectively.
Therefore we have a \textit{linear order} on each of the sets $d_{-}(\strip_{\stInd})$ and $d_{+}(\strip_{\stInd})$ of all half-edges incident to the \textit{vertex} $\strip_{\stInd}$ of $\Gr$.
\end{enumerate}

Thus, ``very formally'', a graph of a striped atlas is the following object
\[
\Gr = (\StInd, H, \xi, \sigma)
\]
where
\begin{itemize}[leftmargin=4ex]
\item 
$\StInd$ is a set, called the set of \textit{vertices} of $\Gr$.

\item 
$H = \mathop\bigsqcup\limits_{\stInd \in \StInd} \Bigl( d_{-1}(\stInd) \,\sqcup\, d_{+1}(\stInd) \Bigr)$ is a family of mutually disjoint at most countable linearly ordered sets, $d_{-1}(\stInd)$ and $d_{+1}(\stInd)$, called \textit{half-edges incident to $\stInd$}. 
We also denote $d(\stInd) = d_{-}(\stInd) \sqcup d_{+}(\stInd)$.

\item
$\xi: H \to H$ is an involution, i.e.\! a bijection such that $\xi^2 = \id_{H}$.
In this case if $\bdX\not=\xi(\bdX)$ for some $\bdX\in H$, then the \textit{unordered} pair $\edge{\bdX}{\xi(\bdX)}$ is called a \textit{closed edge} of $\Gr$.
Otherwise $\bdX$ is fixed point of $\xi$ and is called a \textit{half-open} edge of $\Gr$.

\item 
$\sigma: E \to \mZZ$ is a map from the set 
\[E = \bigl\{ \edge{\bdX}{\xi(\bdX)} \mid \bdX\in H, \ \bdX \not=\xi(\bdX) \bigr\}\]
of all closed edges of $\Gr$ to $\mZZ$, called \myemph{orientation of gluing}.

Equivalently, $\sigma$ can be regarded as a map $\sigma:H \setminus\mathrm{Fix}(\xi) \to \mZZ$ such that $\sigma\circ\xi=\sigma$.
\end{itemize}

\begin{definition}\label{def:striped_graphs_iso}
Let $\Gr =(\StInd, H, \xi, \sigma)$ and $\Gr'=(\StInd', H', \xi', \sigma')$
be graphs of striped atlases of some striped surfaces.
Then by an \myemph{isomorphism} of these graphs we will mean four maps
\begin{align*} 
&\viso: \StInd \to \StInd', &
&\eiso: H \to H', &
&\lori, \tori: \StInd \to \mZZ,
\end{align*}
having the following properties.
\begin{enumerate}[leftmargin=*, label=(\alph*)]
\item\label{enum:def:striped_graphs_iso:vertices}
$\viso$ and $\eiso$ are bijections satisfying the identity
\[
\eiso(d_{s}(\stInd)) = d'_{\tori(\stInd) \cdot s}(\viso(\stInd))
\]
for all $\stInd\in\StInd$ and $s\in\mZZ$, where $d'_{\pm1}(\stInd') \subset H'$ is the set of half edges of $\Gr'$ incident to $\stInd'\in\StInd$.
Moreover, both bijections 
\begin{align*} 
& \eiso|_{d_{-1}(\stInd)}: d_{-1}(\stInd) \to d'_{-\tori(\stInd)}(\viso(\stInd)), \\
& \eiso|_{d_{+1}(\stInd)}: d_{+1}(\stInd) \to d'_{\tori(\stInd)}(\viso(\stInd)),
\end{align*}
are increasing for $\lori(\stInd) = +1$ and decreasing for $\lori(\stInd) = -1$.

\item\label{enum:def:striped_graphs_iso:half_edges}
$\xi'\circ\eiso = \eiso \circ \xi$, in particular, $\eiso$ induces a bijection between closed edges of $\Gr$ and $\Gr'$.

\item\label{enum:def:striped_graphs_iso:gluing_orientation}
Let $\edge{\bdX}{\bdY}$ be a closed edge of $\Gr$ with $\bdX \in d(\stInd)$ and $\bdY=\xi(\bdX) \in d(\mu)$ for some $\stInd, \mu \in\StInd$.
Then
\begin{equation}\label{equ:compat_cond}
\lori(\stInd) \cdot \sigma(\bdX,\bdY) = \sigma'(\eiso(\bdX),\eiso(\bdY)) \cdot \lori(\mu).
\end{equation}
\end{enumerate}
\end{definition}

Notice that the set $\AutG$ of all automorphisms of a graph $\Gr$ is a group with respect to the following multiplication: if 
\[ a'=(\viso', \eiso', \lori', \tori'), \  a=(\viso, \eiso, \lori, \tori)  \ \in \ \AutG,\]
then their product $a'' = a' a=(\viso'', \eiso'', \lori'', \tori'')$ is defined as follows:
\begin{align}
\label{equ:AutG_prj_LH}
\viso'' &= \viso' \circ\viso,  &
\eiso'' &= \eiso' \circ \eiso, \\
\label{equ:AutG_prj_Z22}
\lori''(\stInd) &= \lori'(\viso(\stInd)) \cdot \lori(\stInd), &
\tori''(\stInd) &= \tori'(\viso(\stInd)) \cdot \tori(\stInd),
\end{align}
for all $\stInd\in\StInd$.

Let $\mathbf{1}:\StInd\to\mZZ$ be the constant function taking value $+1$.
Then $(\id_{\StInd}, \id_{H}, \mathbf{1}, \mathbf{1})$ is the unit of $\AutG$ and $(\viso, \eiso, \lori, \tori)^{-1} = (\viso^{-1}, \eiso^{-1}, \lori, \tori)$.

For a set $X$ denote by $\Sigma(X)$ the group of all bijections of $X$, that is the permutation group on $X$.
For a group $A$ let also $A^X$ be the group of all maps $X \to A$ with respect to the point-wise multiplication.
Then the group $\Sigma(X)$ naturally acts from the right on $A^X$ by the rule: the result of the action of a bijection $\viso:X \to X$ from $\Sigma(X)$ on a map $a:X \to A$ belonging to $A^X$ is the composition map
\[
a\circ \viso: X \xrightarrow{~~\viso~~} X \xrightarrow{~~a~~} A.
\]
The corresponding semidirect product $A^X \rtimes \Sigma(X)$ is called the \textit{wreath} product of $\Sigma(X)$ and $A$ over $X$ and denoted by $A \wr_{X} \Sigma(X)$.
Thus, by definition, $A \wr_{X} \Sigma(X)$ is a direct product of sets $A^X \times \Sigma(X)$ with respect to the following multiplication:
\[
(a',\viso') (a, \viso) = \bigl((a'\circ\viso) \cdot a, \ \viso'\circ\viso),
\]
where $\cdot$ denotes multiplication in $A^X$.
Notice that there is a natural surjective homomorphism $\eta:A \wr_{X} \Sigma(X) \to \Sigma(X)$, $\eta(a,\viso) = \viso$, whose kernel is $A^X \times \id_{X}$.
Moreover, we also have an inclusion $\mathbf{1} \times \Sigma(X) \subset A \wr_{X} \Sigma(X)$, where $\mathbf{1}:X \to A$ is the constant map into the unit of $A$.
In other words the following short exact sequence
\[
1 \to A^X \to A \wr_{X} \Sigma(X) \xrightarrow{~~\eta~~}  \Sigma(X) \to 1
\]
admits a \textit{section} $s:\Sigma(X) \to A \wr_{X} \Sigma(X)$, $s(\viso) = (\mathbf{1},\viso)$, i.e. a homomorphism such that $\eta\circ s = \id(\Sigma(X))$.

Rewriting~\eqref{equ:AutG_prj_LH} and~\eqref{equ:AutG_prj_Z22} in the form:
\[
(\viso', \eiso', \lori', \tori')\ (\viso, \eiso, \lori, \tori) = 
\bigr(\viso' \circ\viso, \
\eiso' \circ \eiso, \ 
(\lori'\circ\viso) \cdot \lori, \
(\tori'\circ\viso) \cdot \tori
\bigl)
\]
we see that $\AutG$ is a \textit{subgroup} of
\[
\Bigl( \mZZ^2 \wr_{\StInd} \Sigma(\StInd) \Bigr) \times \Sigma(H).
\]

\begin{theorem}\label{th:iso_atlas_iso_graph}
Each equivalence of striped atlases induces an isomorphism between their graphs.
Conversely, each isomorphism between their graphs is induced by some striped atlases equivalence.
\end{theorem}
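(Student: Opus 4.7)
The plan is to analyze both implications by tracking how a foliated homeomorphism acts on each individual strip and its boundary intervals, and then to invoke Theorem~\ref{th:homeomorphism_of_strips} in the converse direction to reconstruct strip-level homeomorphisms from prescribed boundary data.

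\textbf{Forward direction.} Suppose $(h,k)$ is an equivalence of the atlases $\qmap:\preStripSurf\to\stripSurf$ and $\qmap':\preStripSurf'\to\stripSurf'$. Since $h$ is a foliated homeomorphism of a disjoint union of strips, it permutes the connected components and induces a bijection $\viso:\StInd\to\StInd'$ with $h(\strip_{\stInd})=\strip'_{\viso(\stInd)}$. The restriction $h|_{\strip_{\stInd}}$ sends each leaf $\bR\times\{t\}$ homeomorphically onto a horizontal leaf, hence acts on the first coordinate either as an increasing or as a decreasing homeomorphism; by continuity this choice is constant on the connected strip and defines $\lori(\stInd)\in\mZZ$. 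Continuity also forces $h|_{\strip_{\stInd}}$ either to preserve or to swap the two sides $\partial_{\pm}\strip_{\stInd}$, giving $\tori(\stInd)\in\mZZ$. The restriction of $h$ to half-edges defines a bijection $\eiso:H\to H'$, and the required monotonicities together with~\ref{enum:def:striped_graphs_iso:vertices} translate the meaning of $\lori$ and $\tori$. Item~\ref{enum:def:striped_graphs_iso:half_edges} follows from $\qmap'\circ h=k\circ\qmap$ because glued half-edges (those with the same image in $\stripSurf$) must be sent to glued half-edges. Finally, comparing the two sides of the gluing diagram gives $h|_{\bdX}\circ\phi_{\bdGlueInd}=\phi'_{\bdGlueInd'}\circ h|_{\bdY}$, and applying $\ori$ to this equation together with $\ori(h|_{\bdX})=\lori(\stInd)$ and $\ori(h|_{\bdY})=\lori(\mu)$ yields exactly~\eqref{equ:compat_cond}.

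\textbf{Converse direction.} Given an isomorphism $(\viso,\eiso,\lori,\tori)$, I construct $h$ strip by strip. For each closed edge $\edge{\bdX}{\bdY}$ with $\bdY\in d(\mu)$, pick arbitrarily a monotone homeomorphism $h_{\bdY}:\bdY\to\eiso(\bdY)$ of orientation $\lori(\mu)$, and then let $h_{\bdX}$ be the forced map
\[
h_{\bdX}\;=\;\phi'_{\bdGlueInd'}\circ h_{\bdY}\circ\phi_{\bdGlueInd}^{-1}:\bdX\to\eiso(\bdX).
\]
Condition~\eqref{equ:compat_cond} is precisely the multiplicative equation guaranteeing $\ori(h_{\bdX})=\lori(\stInd)$, where $\stInd$ is the vertex incident to $\bdX$. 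For each half-open half-edge incident to $\stInd$, choose any homeomorphism onto its $\eiso$-image with orientation $\lori(\stInd)$. Combined with the bijection of half-edges prescribed by $\eiso$---which by~\ref{enum:def:striped_graphs_iso:vertices} sends $d_{\pm}(\stInd)$ monotonically to $d'_{\pm\tori(\stInd)}(\viso(\stInd))$ in the direction dictated by $\lori(\stInd)$---these pieces assemble into a monotone homeomorphism $\partial\strip_{\stInd}\to\partial\strip'_{\viso(\stInd)}$. Theorem~\ref{th:homeomorphism_of_strips} then extends it to a foliated homeomorphism $h_{\stInd}:\strip_{\stInd}\to\strip'_{\viso(\stInd)}$, and the disjoint union $h=\bigsqcup_{\stInd}h_{\stInd}$ is a foliated homeomorphism $\preStripSurf\to\preStripSurf'$.

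It remains to descend $h$ to a homeomorphism $k$ of the quotient surfaces. By construction, on each closed edge the two restrictions of $h$ intertwine $\phi_{\bdGlueInd}$ and $\phi'_{\bdGlueInd'}$, so $\qmap'\circ h$ is constant on the fibres of $\qmap$, and the universal property of the quotient topology produces a continuous foliated map $k:\stripSurf\to\stripSurf'$ with $k\circ\qmap=\qmap'\circ h$. Running the same construction for the inverse graph isomorphism produces a two-sided continuous inverse of $k$, so $k$ is a foliated homeomorphism and $(h,k)$ is the desired equivalence. The only substantive obstacle is the self-consistency of the boundary data in the converse direction: the content of~\eqref{equ:compat_cond} is exactly to allow a free choice of $h_{\bdY}$ on one side of each closed edge while forcing the orientation of the derived $h_{\bdX}$ on the opposite side to agree with $\lori(\stInd)$, so that the pieces fit together into globally monotone boundary maps. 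Everything else reduces to bookkeeping with the quotient topology.
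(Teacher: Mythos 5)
Your forward direction is essentially the paper's argument: extract $\nu$ and $\varepsilon$ from the permutation of strips and boundary intervals, read off $l$ and $\tau$ from how $h|_{\strip_\lambda}$ acts on the two coordinates, and obtain~\eqref{equ:compat_cond} by applying $\mathbf{or}$ to the intertwining relation $h|_{\bdX}\circ\phi = \phi'\circ h|_{\bdY}$ coming from $\qmap'\circ h = k\circ\qmap$. This matches the paper precisely.

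Your converse direction, however, is a genuinely different route from the paper's and is worth contrasting. The paper first invokes Theorem~\ref{th:all_striped_surf_are_affine} to replace both atlases by affine ones, then defines $h_\lambda$ on \emph{every} boundary interval as the unique affine homeomorphism of the prescribed orientation; compatibility with the gluings is then automatic, since two affine homeomorphisms between the same pair of bounded intervals with the same orientation must coincide (this is exactly how the paper closes the central square in diagram~\eqref{equ:constr_k}). You instead skip the affinization: you choose an arbitrary interval homeomorphism on one preferred end of each closed edge and on each half-open edge, then \emph{force} the other end of every closed edge via conjugation by the two gluing maps, so compatibility holds by construction and~\eqref{equ:compat_cond} is used precisely to verify that the forced map has the correct orientation $l(\lambda)$. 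Both approaches are valid. Yours avoids a dependency (Theorem~\ref{th:all_striped_surf_are_affine}) at the cost of an asymmetric free/forced bookkeeping across each edge and the small notational wrinkle that $\varepsilon$ need not respect the $\bdX$/$\bdY$ roles of the two atlases, so the forced map may in fact involve $(\phi'_{\gamma'})^{-1}$ rather than $\phi'_{\gamma'}$; since inverting does not change orientation, the orientation computation is unaffected. One further minor point: the statement that ``running the same construction for the inverse graph isomorphism produces a two-sided inverse of $k$'' is slightly misleading because the construction involves arbitrary choices; the cleaner argument, which your wording implicitly gestures at, is that $h^{-1}$ itself already intertwines $\phi'$ and $\phi$ and therefore descends to a continuous map that is manifestly $k^{-1}$.
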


Before proving Theorem~\ref{th:iso_atlas_iso_graph} let us first consider several illustrating examples.
To preserve the formalism we need to talk about maps from empty set.
As usual, we identify a map $f:A \to B$ between sets with its graph 
$\{(a,f(a)) \mid a\in A\} \subset A\times B$.
Therefore a map $\varnothing \to B$ from empty set is an empty subset of the empty set $\varnothing \times B$.

\begin{example}\label{exmp:open_strip}
Let $\strip = \bR\times(-1,1)$ and $\qmap=\id_{\strip}:\strip \to\strip$ be a striped atlas consisting of one strip, see Figure~\ref{fig:atlas_of_one_strip}(a).
Then $\StInd=\{*\}$ consists of a unique point, $H = \varnothing$, and so $\xi:H \to H$ and $\sigma:H\setminus\mathrm{Fix}(\xi)\to\mZZ$ are maps of empty set.

Let  $(\viso, \eiso, \lori, \tori) \in \AutG$.
Then $\viso = \id_{\StInd}$ and $\eiso = \id_{H}$ are uniquely determined, while $\lori,\tori:\{*\}\to\mZZ$ can be arbitrary maps.
It easily follows that $\AutG \cong \mZZ \times \mZZ$.
\end{example}

\begin{figure}[ht]
\begin{tabular}{ccc}
\includegraphics[height=0.3cm]{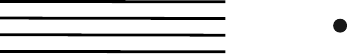} & \qquad\qquad\qquad & 
\includegraphics[height=0.7cm]{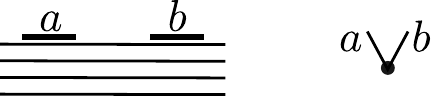}   \\
(a) & &  (b)
\end{tabular}
\caption{Striped atlases consisting of one strip and being the identity homeomorphisms}\label{fig:atlas_of_one_strip}
\end{figure}

\begin{example}\label{exmp:strip_with_2_bd}
Let $\strip = \bR\times(-1,1) \cup \{(-2,-1) \cup (1,2)\} \times \{1\}$ and again $\qmap=\id_{\strip}:\strip \to\strip$ be a striped atlas consisting of one strip, see Figure~\ref{fig:atlas_of_one_strip}(b).
Then $\StInd=\{*\}$ consists of a unique point, $H = \{a,b\} = d_{+1}(*)$, where $a=(-2,-1)\times\{1\}$, $b=(1,2)\times\{1\}$, and $a<b$ in the sense of the linear order in $d_{+1}(*)$.
As these intervals are not glued, we see that $\xi = \id_{H}:H \to H$ and so $\sigma:H\setminus\mathrm{Fix}(\xi)\to\mZZ$ is a map from empty set.

Let $x=(\viso, \eiso, \lori, \tori) \in \AutG$.
Then $\viso = \id_{\StInd}$.
Moreover, as $H = d_{+1}(*)$, and so $d_{-1}(*)=\varnothing$, it follows that $\eiso(d_{+1}(*)) = d_{+1}(*)$, whence $\tori(*) = +1$.

If $\eiso(a)=a$, then $\eiso=\id_{H}$, whence $\lori(*)=+1$, and so $x$ is the unit of $\AutG$.
Suppose $\eiso(a)=b$, then $\eiso(b)=a$, so $\eiso$ is an order reversing bijection of $H = d_{+1}(*)$, whence $\lori(*)=-1$.
Thus $\AutG$ consists of two elements, i.e.\! $\AutG \cong \mZZ$.
\end{example}

\begin{example}\label{exmp:open_cylinder}
Let $\strip=\bR\times[0,1]$, $\phi:\bR\times\{0\}\to\bR\times\{+1\}$ be a homeomorphism given by $\phi(x,0) = (x,1)$, then the quotient $\stripSurf = \strip / \phi$ is an \textit{open cylinder} $\bR\times S^1$, and the quotient map $\qmap:\strip \to\stripSurf$ is a striped atlas, see Figure~\ref{fig:cylindes_mobius_band}(a).

In this case $\StInd=\{*\}$ again consists of a unique point, $H = \{a,b\}$, where $a=\bR\times\{0\}$, $b=\bR\times\{1\}$, $\xi:H\to H$ is given $\xi(a)=b$, $\xi(b)=a$, and $\sigma:H\to\mZZ$ is defined by $\sigma(a)=\sigma(b)=\ori(\phi)=+1$.

Let $x=(\viso, \eiso, \lori, \tori) \in \AutG$.
Then $\viso = \id_{\StInd}$.
Moreover, since $\Gr$ has a unique edge $\edge{a}{b}$, $\eiso$ preserves this edge, whence it follows from~\eqref{equ:compat_cond} that $\lori(a) =\lori(b)$.

Suppose $\eiso(a)=a$, then $\eiso=\id_{H}$, whence $\tori(*)=+1$.
Otherwise, $\eiso(a)=b$, $\eiso(b)=a$, and $\tori(*)=-1$.
Notice that in both of those cases, the common value $\lori(a) =\lori(b)$ can be taken arbitrary.

This implies that $\AutG \cong \mZZ \times \mZZ$.
\end{example}

\begin{example}\label{exmp:open_Mobius_band}
Suppose as in the previous example $\strip=\bR\times[0,1]$, but now $\phi:\bR\times\{0\}\to\bR\times\{+1\}$ is given by $\phi(x,0) = (-x,1)$, and so it reverses orientation.
In this case the quotient $\stripSurf = \strip / \phi$ is an \textit{open M\"obius band}, see Figure~\ref{fig:cylindes_mobius_band}(b).
One easily check that $\AutG \cong \mZZ \times \mZZ$ as well.
\end{example}

\begin{figure}[ht]
\begin{tabular}{ccc}
\includegraphics[height=1.5cm]{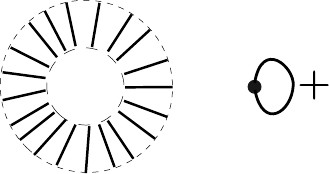} & \qquad\qquad\qquad & 
\includegraphics[height=1.5cm]{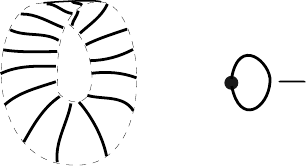}  \\
(a) & &  (b) 
\end{tabular}
\caption{Foliated open cylinder and M\"obius band}\label{fig:cylindes_mobius_band}
\end{figure}

\begin{proof}[Proof of Theorem~\ref{th:iso_atlas_iso_graph}]
Let 
\begin{align*}
&\qmap: \preStripSurf= \mathop{\sqcup}\limits_{\stInd\in\StInd}\strip_{\stInd} \to \stripSurf, &
&\qmap':\preStripSurf' = \mathop{\sqcup}\limits_{\stInd'\in\StInd'}\strip'_{\stInd'} \to \stripSurf'
\end{align*}
be striped atlases on surfaces $\stripSurf$ and $\stripSurf'$ respectively, and $\Gr=(\StInd, H, \xi, \sigma)$ and $\Gr'=(\StInd', H', \xi', \sigma')$ be the their graphs.

1) Suppose $(\dif,k)$ is a pair of homeomorphisms defining an equivalence of atlases, so we have a commutative diagram~\eqref{eq:strip_surf_equiv}.
Then $\dif$ induces a bijection between the connected components of $\preStripSurf$ and $\preStripSurf'$ which yields a bijection $\viso:\Lambda \to \Lambda'$ between the corresponding sets of indices (being in turn vertices of $\Gr$ and $\Gr'$) such that $\dif(\strip_{\stInd}) = \strip'_{\viso(\stInd)}$.

In particular, $\dif$ yields also a bijection between the boundary components of $\preStripSurf$ and $\preStripSurf'$ being sets of half edges of $\Gr$ and $\Gr'$.
Thus we get a bijection $\eiso:H \to H'$.

It remains to define the functions $\lori,\tori:\StInd\to\mZZ$.
Take $\stInd\in\StInd$ and consider the restriction 
$\dif|_{\strip_{\stInd}}:\strip_{\stInd}\to \strip'_{\viso(\stInd)}$.
Assume that $\Int\strip_{\stInd}=\bR\times(a,b)$ and $\Int\strip_{\viso(\stInd)}=\bR\times(c,d)$ for some $a<b, c<d\in\bR$.
Since $\dif|_{\strip_{\stInd}}$ preserves leaves being horizontal lines, we have that
\[\dif|_{\strip_{\stInd}}(x,y) = (\alpha(x,y), \beta(y))\]
where 
\begin{itemize}[leftmargin=4ex]
\item 
$\alpha:\strip_{\stInd} \to \bR$ is a continuous function such that for each $y\in(a,b)$ the correspondence $x \mapsto \alpha(x,y)$ is a homeomorphism $\alpha_y: \bR\to\bR$;
\item 
$\beta:(a,b) \to (c,d)$ is a homeomorphism.
\end{itemize}
Evidently, all homeomorphisms $\alpha_{y}$ are increasing or decreasing mutually for all $y\in(a,b)$, i.e. $\ori(\alpha_{y})$ does not depend on $y\in(a,b)$.
Therefore we set
\begin{align*}
\lori(\stInd) &= \ori(\alpha_y), &
\tori(\stInd) &= \ori(\beta).
\end{align*}

We claim that $(\viso, \eiso, \lori, \tori)$ is an isomorphism between graphs $\Gr$ and $\Gr'$ in the sense of Definition~\ref{def:striped_graphs_iso}.

Notice that the restriction $\dif|_{\partial\strip_{\stInd}}: \partial\strip_{\stInd} \to \partial\strip'_{\viso(\stInd)}$ is a monotone homeomorphism which easily implies conditions~\ref{enum:def:striped_graphs_iso:vertices} and~\ref{enum:def:striped_graphs_iso:half_edges} of Definition~\ref{def:striped_graphs_iso}.
We leave the verification for the reader and will check condition~\ref{enum:def:striped_graphs_iso:gluing_orientation} only.

Let $\edge{\bdX}{\bdY}$ be a closed edge of $\Gr$ with $\bdX \in d(\stInd)$ and $\bdY=\xi(\bdX) \in d(\mu)$ for some $\stInd, \mu \in\StInd$, and 
$\bdX'=\eiso(\bdX)$ and $\bdY'=\eiso(\bdY)$.
This means that $\bdX \subset \partial \strip_{\stInd}$ and $\bdY \subset \partial \strip_{\mu}$ are boundary components with $\qmap(\bdX) = \qmap(\bdY)$, $\bdX' = \dif(\bdX) \subset \partial \strip'_{\viso(\stInd)}$, and $\bdY' = \dif(\bdY) \subset \partial \strip'_{\viso(\mu)}$.
Then we have the following commutative diagram:
\[
\begin{CD}
\bdY @>{\dif|_{\bdY}}>{~~~~~~~~~~\ori(\dif|_{\bdY}) = \lori(\mu)~~~~~~~~~~}> \bdY' \\
@V{\phi}V{\ori(\phi)  = \sigma(\bdX,\bdY)}V @V{\ori(\phi')= \sigma'(\bdX',\bdY')}V{\phi'}V \\
\bdX @>~~~~~~~~~~\ori(\dif|_{\bdX}) = \lori(\stInd) ~~~~~~~~~~>{\dif|_{\bdX}}> \bdX'
\end{CD}
\]
where $\phi$ and $\phi'$ are gluing homeomorphisms.
Hence
\begin{align*}
\lori(\stInd) \cdot \sigma(\bdX,\bdY) &=
\ori(\dif|_{\bdY})\cdot \ori(\phi) =  \ori(\phi\circ \dif|_{\bdY}) = \\
&= \ori(\dif|_{\bdX} \circ \phi') =
\ori(\dif|_{\bdX}) \cdot \ori(\phi') =
\lori(\mu) \cdot \sigma'(\bdX',\bdY').
\end{align*}

2) To prove the converse statement, notice that due to Theorem~\ref{th:all_striped_surf_are_affine}, one can assume in addition that both atlases $\qmap$ and $\qmap'$ are affine.

Let $(\viso,\eiso,\lori, \tori)$ be an isomorphism between $\Gr$ and $\Gr'$ in the sense of Definition~\ref{def:striped_graphs_iso}.

Let $\stInd \in \StInd$ and $\stInd' = \viso(\stInd)$.
We will now construct a homeomorphism $\dif_{\stInd}:\strip_{\stInd} \to \strip'_{\stInd'}$ in the following way.

(i) First suppose $\partial\strip_{\stInd}=\varnothing$, that is $d(\stInd)=\varnothing$.
Since $\eiso$ bijectively maps $d(\stInd)$ onto $d'(\stInd')$, it follows that $d'(\stInd')=\varnothing$, and so $\partial\strip'_{\nu(\stInd)}=\varnothing$ as well.
Not loosing generality, one can assume that $\strip_{\stInd}=\strip'_{\stInd'}=\bR\times(-1,1)$.
Then we define $\dif_{\stInd}$ by the formula:
\[
\dif_{\stInd}(x,y) = \bigl( \lori(\stInd) x, \tori(\stInd) y ).
\]

(ii) Now assume that $\partial\strip_{\stInd}\not=\varnothing$.
Let $\bdX \in d(\stInd)$ be a half-edge in $\Gr$ incident to the vertex $\stInd$, that is $\bdX$ is a boundary component of $\strip_{\stInd}$.
Then $\viso(\bdX)$ is a boundary interval of $\strip'_{\viso(\stInd)}$.
Since we assumed that strips $\strip_{\stInd}$ and $\strip'_{\viso(\stInd)}$ are model, the intervals $\bdX$ and $\viso(\bdX)$ are bounded.
Define $\dif_{\stInd}$ on $\bdX$ to be a unique affine homeomorphism $\psi_{\bdX}:\bdX \to \viso(\bdX)$ with $\ori(\psi_{\bdX}) = \lori(\stInd)$.

The family of all $\{\psi_{\bdX}\}_{\bdX\in d(\stInd)}$ give a homeomorphism $\dif_{\stInd}:\partial\strip_{\stInd} \to \partial\strip'_{\viso(\stInd)}$.
Due to property~\ref{enum:def:striped_graphs_iso:vertices} of Definition~\ref{def:striped_graphs_iso}, $\dif_{\stInd}$ is monotone, and therefore by Theorem~\ref{th:homeomorphism_of_strips} $\dif$ extends to a foliated homeomorphism $\dif_{\stInd}:\strip_{\stInd} \to \strip'_{\viso(\stInd)}$.

Thus we obtain a foliated homeomorphism 
$\dif: \mathop{\sqcup}\limits_{\stInd\in\StInd}\strip_{\stInd} \to \mathop{\sqcup}\limits_{\stInd'\in\StInd'}\strip'_{\stInd'}$
defined by $\dif|_{\strip_{\stInd}} = \dif_{\stInd}$ for $\stInd\in\StInd$.

We claim that \textit{$\dif$ induces a foliated homeomorphism $k:\stripSurf\to \stripSurf'$ such that the pair $(\dif,k)$ is an equivalence of striped atlases $\qmap$ and $\qmap'$}.

Let $D \subset \preStripSurf$, (resp. $D' \subset \preStripSurf'$), be the set of boundary intervals on which $\qmap$, (resp. $\qmap'$), is not injective.
Then $\dif$ yields a homeomorphism of $\preStripSurf\setminus D$ onto $\preStripSurf'\setminus D'$, whence the restriction $k:\qmap(\preStripSurf\setminus D) \to \qmap'(\preStripSurf'\setminus D')$ must be given by $k = \qmap' \circ \dif\circ\qmap^{-1}$.

Therefore it remains to show that $\dif$ is ``compatible'' with $\qmap$ and $\qmap'$ on $D$ and $D'$ in the sense that for each pair of boundary intervals $\bdX\subset\partial\strip_{\stInd}$ and $\bdY\subset\partial\strip_{\mu}$ with $\qmap(\bdX) = \qmap(\bdY)$, 
we have that $\qmap'(\dif(\bdX))=\qmap'(\dif(\bdY))$ and the following commutative diagram holds true:
\begin{equation}\label{equ:constr_k}
\xymatrix{
\qmap(\bdX)\ar@{=}[d] && \bdX \ar[ll]_-{\qmap} \ar[rrr]^-{\dif|_{\bdX} = \psi_{\bdX}} \ar[d]_-{\phi} 
&&& \bdX' \ar[rr]^-{\qmap'} \ar[d]_{\phi'} && \qmap'(\bdX')\ar@{=}[d] \\
\qmap(\bdY) && \bdY \ar[ll]_-{\qmap} \ar[rrr]^-{\dif|_{\bdY} = \psi_{\bdY}}
&&& \bdY' \ar[rr]^-{\qmap'}        && \qmap'(\bdY')
}
\end{equation}
where $\phi$ and $\phi'$ are gluing homeomorphisms.
Then for each $z\in D$ we will set $k(\qmap(z)) = \qmap'\circ \dif(z)$.

In term of graphs we have that $\edge{\bdX}{\bdY}$ is a closed edge of $\Gr$ such that $\bdX\in d(\stInd)$, $\bdY=\xi(\bdX)\in d(\mu)$, $\bdX'=\eiso(\bdX)$, and $\bdY'=\eiso(\bdY)$.
Then by \ref{enum:def:striped_graphs_iso:half_edges}
\[
\bdY' = \eiso(\bdY) = \eiso\circ \xi(\bdX) = \xi' \circ \eiso(\bdX) = \xi'(\bdX)
\]
and so $\{\bdX', \bdY'\}$ is a closed edge of $\Gr'$, that is $\qmap'(\dif(\bdX))=\qmap'(\dif(\bdY))$.

Then we have the diagram~\eqref{equ:constr_k} but need to check commutativity of its central square consisting of affine homeomorphisms.
It follows from~\ref{enum:def:striped_graphs_iso:gluing_orientation} that $\ori(\phi' \circ \psi_{\bdX}) = \ori(\psi_{\bdY}\circ \phi)$.
Since $\phi' \circ \psi_{\bdX}, \,\psi_{\bdY}\circ \phi: \bdX \to \bdY'$ are \textit{affine} homeomorphisms, it follows that they coincide, and so diagram~\eqref{equ:constr_k} is commutative.

Thus $(\dif,k)$ is an equivalence of striped atlases inducing given isomorphism $(\viso,\eiso,\lori, \tori)$ between $\Gr$ and $\Gr'$. 
\end{proof}

\section{Characterization of a certain class of striped surfaces}\label{sect:charact_str_prop}
Let $(\stripSurf,\Partition)$ be a foliated surface with countable base, $\stripSurf/\Partition$ the set of leaves of $\Partition$, and $p:\stripSurf\to\stripSurf/\Partition$ be the quotient map.
We will endow $\stripSurf/\Partition$ with the quotient topology, so a subset $\Vsp\subset\stripSurf/\Partition$ is open if and only if $p^{-1}(\Vsp)$ is open in $\stripSurf$.
Notice that a priori $\stripSurf/\Partition$ is not even a $T_0$-space.

For each leaf $\leaf$ of $\Partition$ let $\JL=[0,1)$ if $\omega\subset\partial\stripSurf$ and $\JL=(-1,1)$ otherwise.
Then a \myemph{cross-section} of $\Partition$ passing through $\omega$ is a continuous map $\gamma:\JL\to\stripSurf$ such that $\gamma(0)\in\leaf$ and for distinct $s,t\in \JL$ their images $\gamma(s)$ and $\gamma(t)$ belong to distinct leaves of $\Partition$.

A subset $\Usp\subset\stripSurf$ is called \myemph{saturated} if it is a union of leaves.
For each leaf $\omega\in\Partition$ denote by $\hcl{\omega}$ the intersection of closures of all saturated neighbourhoods of $\omega$.
Evidently $\omega\subset \hcl{\omega}$.

\begin{definition}\label{def:spec_leaf}\cite{MaksymenkoPolulyakh:PGC:2015}
A leaf $\omega$ will be called \myemph{special} whenever $\omega \not= \hcl{\omega}$, see Figure~\ref{fig:spec_leaves}.
We will denote by $\Sigma$ the family of all special leaves of $\Partition$.
\end{definition}
In~\cite{HaefligerReeb:EM:1957} and~\cite{GodbillonReeb:EM:1966} special leaves were called \myemph{branch} points of $\stripSurf/\Partition$.

\begin{figure}[h]
\center{\includegraphics[height=1.8cm]{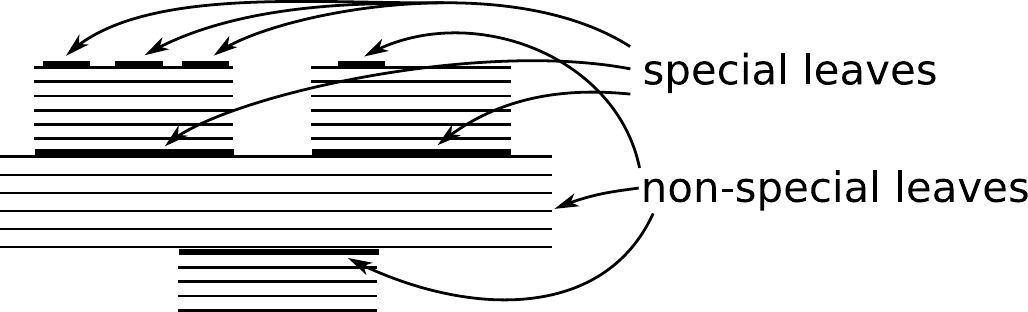}}
\caption{Special and non-special leaves}
\label{fig:spec_leaves}
\end{figure}

\begin{lemma}\label{lm:D_Sigma_prop}
Suppose there exist a striped atlas $\qmap:\mathop{\sqcup}\limits_{\stInd\in\StInd} \strip_{\stInd} \to \stripSurf$ such that $\Partition$ is its canonical foliation.
Let also $D = \qmap\Bigl( \mathop{\sqcup}\limits_{\stInd\in\StInd} \partial\strip_{\stInd}\Bigr)$ be the union of images of boundary components of strips.
Then
\begin{enumerate}[leftmargin=*, label={\rm(\roman*)}]
\item\label{enum:lm:D_Sigma_prop:SdZ_D}
 $\Sigma\cup\partial\stripSurf \ \subset \ D$;
\item\label{enum:lm:D_Sigma_prop:S_dZ_D_loc_fin}
$\Sigma$, $\partial\stripSurf$, and $D$ are a locally finite families of leaves;
\item\label{enum:lm:D_Sigma_prop:S_dZ_D_closed} $\Sigma$, $\partial\stripSurf$, and $D$ are closed subsets of $\stripSurf$.
\end{enumerate}
\end{lemma}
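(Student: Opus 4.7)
The plan is to prove (i) first by exhibiting explicit saturated neighbourhoods, deduce (ii) by a strip-by-strip local analysis, and derive (iii) as a standard consequence of (ii).

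For (i), I would argue as follows: assume $\omega\not\subset D$. Since $D$ contains the $\qmap$-image of every boundary interval, the preimage $\qmap^{-1}(\omega)$ avoids $\mathcal{X}\cup\mathcal{Y}$. By the injectivity of $\qmap$ on $\preStripSurf\setminus(\mathcal{X}\cup\mathcal{Y})$ and the fact that horizontal leaves of $\Int\strip_{\stInd}$ participate in no gluing, $\qmap^{-1}(\omega)$ is a single horizontal line $\bR\times\{t\}$ in a unique strip $\strip_{\stInd}$ with $t\in(u,v)$. For small $\varepsilon>0$ the saturated open set $\Usp_{\varepsilon}:=\qmap(\bR\times(t-\varepsilon,t+\varepsilon))$ has preimage precisely $\bR\times(t-\varepsilon,t+\varepsilon)$ in $\strip_{\stInd}$ and empty preimage in every other strip. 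I would then verify $\overline{\Usp_{\varepsilon}}=\qmap(\bR\times[t-\varepsilon,t+\varepsilon])$, whose intersection over all $\varepsilon$ is exactly $\omega$; this yields $\omega\notin\Sigma$. Moreover $\omega\subset\qmap(\Int\strip_{\stInd})$ is disjoint from $\partial\stripSurf$.

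For (ii), by (i) it suffices to establish local finiteness of $D$. Fix $z\in\stripSurf$. The definition of a striped atlas, together with conditions (3a)--(3c), implies $\qmap^{-1}(z)$ consists of at most two points, each lying either in the interior of a strip or on a single boundary interval. If $\qmap^{-1}(z)\subset\Int\preStripSurf$, the construction of the previous paragraph yields a neighbourhood of $z$ disjoint from $D$. Otherwise, for each preimage $z'$ lying on a boundary interval $\bdX$ of $\strip_{\stInd}$, I would choose a small rectangular neighbourhood $\Vsp_{z'}$ of $z'$ in $\strip_{\stInd}$ whose intersection with $\partial\strip_{\stInd}$ lies entirely in $\bdX$; this is possible because $\partial\strip_{\stInd}$ is open in $\bR\times\{u,v\}$ and $z'$ sits in the single open component $\bdX$. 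When $z$ has two preimages $z',z''$ glued by some $\phi_{\gamma}$, I would match the boundary pieces via $\phi_{\gamma}$ so that $\Vsp_{z'}\cup\Vsp_{z''}$ is saturated. Its $\qmap$-image is then an open neighbourhood $\Usp$ of $z$ whose intersection with $D$ is contained in at most one leaf of $\Partition$.

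Claim (iii) then follows immediately: each leaf of $\Partition$ is a closed subset of $\stripSurf$ by hypothesis, and a locally finite union of closed sets is closed; hence $D$, $\Sigma$, and the union of boundary leaves are closed in $\stripSurf$, while $\partial\stripSurf$ is in any case closed as the topological boundary of the manifold. The main technical subtlety I expect is the equality $\overline{\Usp_{\varepsilon}}=\qmap(\bR\times[t-\varepsilon,t+\varepsilon])$ in the proof of (i); this reduces, via the quotient-topology characterisation, to checking that the preimage of the right-hand side is closed in every strip, which holds because interior leaves of $\strip_{\stInd}$ are not identified with anything outside $\strip_{\stInd}$. A parallel but milder compatibility check is needed in step (ii) when transporting $\Vsp_{z'}$ across $\phi_{\gamma}$.
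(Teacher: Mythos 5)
Your proposal is correct and follows essentially the same route as the paper: identify leaves outside $D$ as interior leaves admitting explicit saturated product neighborhoods (hence non-special and disjoint from $\partial\stripSurf$), establish local finiteness of $D$ by a local half-disc argument near boundary intervals, and then conclude closedness from the fact that a locally finite union of closed leaves is closed. The paper's proof is terser (it asserts the key claims as ``evident'' or ``one easily checks''), but your filled-in details --- the computation of $\overline{\Usp_{\varepsilon}}$ via the quotient topology, the at-most-two-preimages observation, and the matching of rectangular neighbourhoods across the gluing homeomorphism $\phi_{\bdGlueInd}$ --- are precisely what is needed to make those assertions rigorous.
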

\begin{proof}
\ref{enum:lm:D_Sigma_prop:SdZ_D}
By definition $\partial\stripSurf\subset D$.
Moreover, one easily check that a leaf $\omega\in\Partition$ is special, i.e.\! $\omega\subset\Sigma$, if and only if there exists a boundary interval $\bdX\subset \partial_{\epsilon}\strip_{\stInd}$ for some $\stInd\in\StInd$ and $\epsilon\in\{\pm\}$ such that $\qmap(\bdX) = \omega$ and $\bdX \not=\partial_{\epsilon}\strip_{\stInd}$.
Hence $\Sigma\subset D$ as well.

\ref{enum:lm:D_Sigma_prop:S_dZ_D_loc_fin}.
Evidently, each leaf $\omega$ in $D$ has an open neighbourhood containing no other leaves from $D$.
This implies that $D$ is a locally finite family of closed subsets of $\stripSurf$, whence so any subfamily of $D$.
In particular, this holds for $\Sigma$ and $\partial\stripSurf$.

\ref{enum:lm:D_Sigma_prop:S_dZ_D_closed} follows from~\ref{enum:lm:D_Sigma_prop:S_dZ_D_loc_fin}, since each leaf of $\Partition$ is a closed subset of $\stripSurf$.
\end{proof}

A striped atlas on $\stripSurf$ will be called \myemph{reduced} whenever $D = \Sigma \cup \partial\stripSurf$.

\begin{theorem}\label{th:atlas_to_reduced_form}
{\rm\cite[Theorem~3.7]{MaksymenkoPolulyakh:PGC:2015}.}
Let $\stripSurf$ be a striped surface with countable base.
Then one of the following statements holds true: either 
\begin{enumerate}[label={\rm(\arabic*)}]
\item $\stripSurf$ is foliated homeomorphic to the open cylinder or M\"obius band from Examples~\ref{exmp:open_cylinder} and~\ref{exmp:open_Mobius_band}, or
\item $\stripSurf$ admits a reduced atlas.
\end{enumerate}  
\end{theorem}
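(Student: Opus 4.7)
The plan is to start with an arbitrary striped atlas $\qmap : \preStripSurf = \bigsqcup_{\stInd \in \StInd} \strip_\stInd \to \stripSurf$ and to iteratively merge adjacent strips whenever their common gluing boundary maps to a leaf that is neither special nor contained in $\partial\stripSurf$. The crucial observation, which follows from the proof of Lemma~\ref{lm:D_Sigma_prop}\ref{enum:lm:D_Sigma_prop:SdZ_D}, is that if $\qmap(\bdX)\notin\Sigma$ for a boundary interval $\bdX\subset\partial_\epsilon\strip_\stInd$, then $\bdX=\partial_\epsilon\strip_\stInd$, that is, $\bdX$ is an \emph{entire} side. Consequently, a leaf $\leaf\in D\setminus(\Sigma\cup\partial\stripSurf)$ comes from gluing together a pair of full sides $\partial_\epsilon\strip_\stInd$ and $\partial_{\epsilon'}\strip_\mu$. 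I would then form an auxiliary graph $\Gr^{*}$ with vertex set $\StInd$ whose edges are exactly these ``redundant'' full-side gluings. Since a strip has only two sides, every vertex of $\Gr^{*}$ has degree at most $2$, so every connected component of $\Gr^{*}$ is either an isolated vertex, a finite or infinite path, or a finite cycle.

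For each path component $C$ of $\Gr^{*}$ I would concatenate the corresponding strips into a single strip $\widetilde{\strip}_{C}$ by repeatedly applying Theorem~\ref{th:homeomorphism_of_strips} to identify adjacent full sides; by Theorem~\ref{th:all_striped_surf_are_affine} one may first assume all gluings are affine, which makes the concatenation explicit. The resulting $\widetilde{\strip}_{C}$ is foliated homeomorphic to a strip, and its boundary intervals are precisely the non-redundant ones of the original strips in $C$. For a one- or two-sided infinite path one passes to the colimit of the successive finite concatenations; because countability of $\StInd$ implies that only finitely many merges affect any given compact piece of $\preStripSurf$, the colimit is still a strip. Replacing each path component by $\widetilde{\strip}_{C}$ (and leaving isolated vertices unchanged) produces a new striped atlas $\qmap':\bigsqcup_{C}\widetilde{\strip}_{C}\to\stripSurf$ whose remaining boundary intervals all lie over $\Sigma\cup\partial\stripSurf$, that is, a reduced atlas.

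It remains to deal with the case when $\Gr^{*}$ contains a cycle component $C$. Then every side of every $\strip_\stInd$, $\stInd\in C$, is absorbed by a redundant gluing, so the image $\qmap\bigl(\bigsqcup_{\stInd\in C}\strip_\stInd\bigr)$ is a saturated, boundaryless, connected subsurface of $\stripSurf$ with no special leaves. Gluing the strips around the cycle and tracking the product of the orientation signs $\sigma$ along the cycle shows that this image is foliated homeomorphic to the open cylinder of Example~\ref{exmp:open_cylinder} (when the product equals $+1$) or to the open M\"obius band of Example~\ref{exmp:open_Mobius_band} (when it equals $-1$). If $\stripSurf$ is connected, this subsurface exhausts $\stripSurf$; the disconnected case follows by applying the argument component by component. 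The main obstacle is the infinite-path case of the second step: one must verify carefully that the colimit of an infinite ascending chain of strip-concatenations remains a single strip (the second-coordinate interval stays open at both ends, the resulting surjection is a quotient map, and the conditions in the definition of a striped atlas are preserved). The cycle identification is a clean application of Theorem~\ref{th:homeomorphism_of_strips} combined with the sign calculation, but also requires observing that connectedness of $\stripSurf$ forces a cycle component to account for all of it.
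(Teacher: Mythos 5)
Your proposal is correct and follows essentially the same route as the paper's proof sketch: you identify the boundary intervals mapping to non-special, non-boundary leaves as necessarily full sides, organize the resulting full-side gluings into an auxiliary degree-$\leq 2$ graph, merge path components into single strips (passing to a colimit for infinite paths, using countability), and recognize cycle components as producing the open cylinder or M\"obius band. This matches the paper's ``unessential edges'' argument, with your auxiliary graph $\Gr^{*}$ playing the role of the subgraph of unessential edges; the paper similarly leaves the infinite-path colimit verification implicit.
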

\begin{proof}[Idea of proof.]
We briefly discuss the proof in terms of the graph $\Gr$ of the striped atlas $\qmap$.
It will be convenient to say that an edge $\{\bdX,\bdY\}$ of $\Gr$ is \myemph{unessential} whenever $\bdX = \partial_{\epsilon} \strip$ and $\bdY = \partial_{\epsilon'} \strip'$ for some \myemph{distinct} strips $\strip,\strip'$ of the atlas and some $\epsilon,\epsilon' \in \{\pm\}$, see Figure~\ref{fig:unessential_edge}.

\begin{figure}[h]
\includegraphics[height=3.5cm]{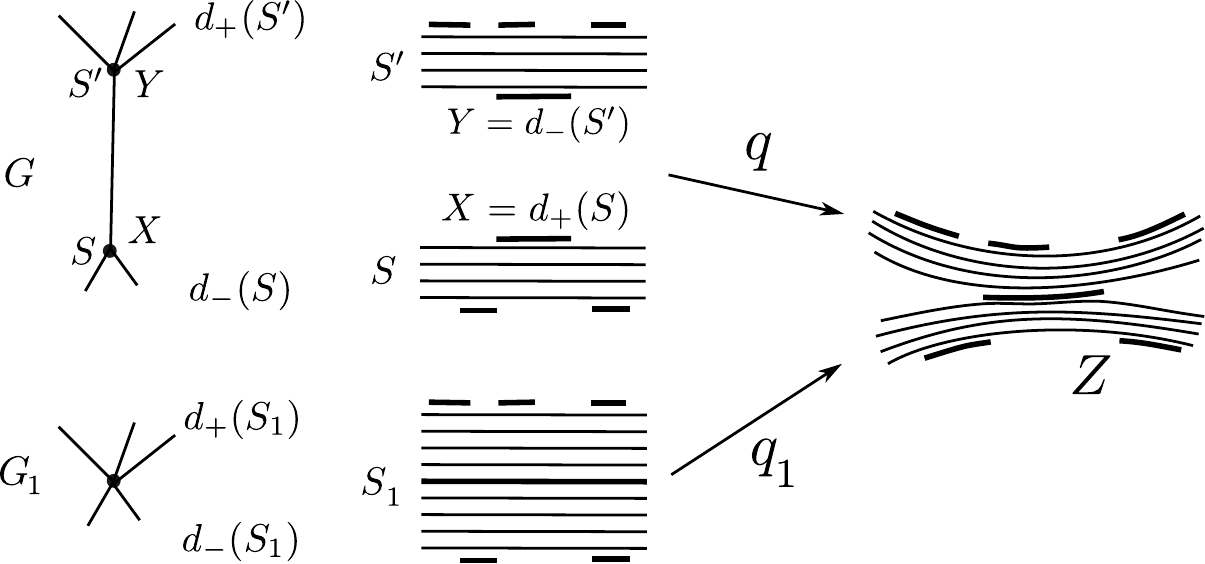}
\caption{Reduction of non-special leaves from $D$}\label{fig:unessential_edge}
\end{figure}

In particular, each unessential edge corresponds to a non-special leaf $\omega \subset D\setminus(\Sigma\cup\partial\stripSurf)$.
The principal observation of Theorem~\ref{th:atlas_to_reduced_form} is that gluing $\strip$ and $\strip'$ along $X$ and $Y$ gives again a strip $\strip_1$, see \cite[Lemma~3.2]{MaksymenkoPolulyakh:PGC:2015}.
Therefore one can replace $\strip$ and $\strip'$ in the atlas $\qmap$ with $\strip_1$.

On the graph this means that we replace a closed edge between $\strip$ and $\strip'$ with one vertex.
That techniques also allows to eliminate even countable paths of such edges.
Hence if $\Gr$ does not contain finite cycles of unessential edges, then, using the assumption that $\stripSurf$ has a countable base, one can remove all unessential edges and obtain a reduced atlas.

However, if there is a finite cycle of unessential edges, then one can remove all of them but one.
This gives two special surfaces: open cylinder and M\"obius band from Examples~\ref{exmp:open_cylinder} and~\ref{exmp:open_Mobius_band} in which we glue $\bdX=\partial_{-}\strip$ with $\bdY=\partial_{+}\strip$.
But the corresponding closed edge $\edge{\bdX}{\bdY}$ is not unessential, since now $\bdX$ and $\bdY$ belong to \textit{the same} strip.
\end{proof}

\newcommand\condLF{\mathrm{(\Sigma LocFin)}}
\newcommand\condU{\mathrm{(SatNbh)}}
\newcommand\condCR{\mathrm{(CrossSect)}}
\newcommand\condST{\mathrm{(StrAtlas)}}
\newcommand\condLT{\mathrm{(PrjLocTriv)}}

Consider the following five conditions on $(\stripSurf,\Partition)$.

\begin{enumerate}[leftmargin=8ex, itemsep=1ex, align=left]
\item[$\condST$:]
$\stripSurf$ admits a striped atlas whose canonical foliation is $\Partition$.

\item[$\condLF$:]
The family $\Sigma$ of all special leaves of $\Partition$ is locally finite.

\item[$\condLT$:]
The quotient map $p:\stripSurf\to\stripSurf/\Partition$ is a locally trivial fibration and the space of leaves $\stripSurf/\Partition$ is locally homeomorphic with $[0,1)$ (though it is not in general a Hausdorff space).

\item[$\condU$:]
For each leaf $\leaf\in\Partition$ there exist an open $\Partition$-saturated neighbourhood $\Usp$ of $\omega$ and a homeomorphism $\eta: \bR\times \JL \to \Usp$ such that $\eta(\bR\times t)$ is a leaf of $\Partition$ and $\eta(\bR\times 0) = \omega$.

\item[$\condCR$:]
Each leaf $\leaf\in\Partition$ has a cross-section passing through $\leaf$.
\end{enumerate}

The following statement summarizes relations between the above properties obtained in~\cite{MaksymenkoPolulyakh:PGC:2015}, \cite{MaksymenkoPolulyakh:MFAT:2016}, \cite{MaksymenkoPolulyakh:PGC:2016}, and in the present paper.

\begin{theorem}\label{th:rel_between_conditions}{\rm\cite{MaksymenkoPolulyakh:MFAT:2016}, \cite{MaksymenkoPolulyakh:PGC:2016}}.
Let $(\stripSurf,\Partition)$ be a foliated surface satisfying the following two conditions:
\begin{enumerate}[label={\rm(\roman*)}]
\item\label{enum:genfolsurf:closed_leaves}
each leaf of $\Partition$ is a non-compact closed subset of $\stripSurf$;
\item\label{enum:genfolsurf:bd_leaves}
each boundary component of $\stripSurf$ is a leaf of $\Partition$.
\end{enumerate}
Then we have the following implications:
\begin{itemize}[leftmargin=*]
\item $\condLT \Rightarrow \condU + \condCR$;
\item $\condST\Rightarrow\condLF\Rightarrow \bigl[ \condLT \Leftrightarrow \condU \Leftrightarrow \condCR \bigr]$;
\item $\condLF + \condU \Rightarrow \condST$.
\end{itemize}
In particular, if either $\condU$ or $\condCR$ hold, then the conditions $\condLF$ and $\condST$ are equivalent.
\end{theorem}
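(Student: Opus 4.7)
The plan is to prove each of the listed implications separately, combining Lemma~\ref{lm:D_Sigma_prop} above with results from the cited references.

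First I would dispatch $\condST \Rightarrow \condLF$ directly from Lemma~\ref{lm:D_Sigma_prop}: part~(i) gives $\Sigma \subset D$, and part~(ii) shows that $D$ is a locally finite family of leaves, whence so is any subfamily, in particular $\Sigma$.

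Next I would handle the three implications that require no hypothesis beyond the standing conditions~(i)--(ii). For $\condLT \Rightarrow \condU$: a trivializing chart of $p$ over a neighbourhood of $p(\omega)$ homeomorphic to $\JL$ (normalized so that $p(\omega)$ corresponds to $0$) is precisely the required homeomorphism $\eta: \bR \times \JL \to \Usp$. From any such $\eta$ the assignment $\gamma(t) = \eta(0,t)$ is a cross-section through $\omega$, yielding both $\condLT \Rightarrow \condCR$ and $\condU \Rightarrow \condCR$.

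For the block $\condLF \Rightarrow [\condLT \Leftrightarrow \condU \Leftrightarrow \condCR]$, I would reverse the previous construction. Given a cross-section $\gamma$ through $\omega$, local finiteness of $\Sigma$ guarantees that a sufficiently small saturated neighbourhood of $\omega$ meets only finitely many special leaves; on such a neighbourhood the transverse parameter furnished by $\gamma$ assembles continuously into a homeomorphism $\eta$ of the form required by $\condU$. Descending through $p$ then yields the local triviality of $\condLT$. These are precisely the arguments from~\cite{MaksymenkoPolulyakh:MFAT:2016} and~\cite{MaksymenkoPolulyakh:PGC:2016}, which I would cite for the details rather than reproduce.

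The hard part will be $\condLF + \condU \Rightarrow \condST$. The plan is to cover $\stripSurf$ by the saturated product neighbourhoods supplied by $\condU$, maximalize them, and recognize the resulting pieces as strips: because $\Sigma$ is locally finite and each boundary component of $\stripSurf$ is itself a leaf by hypothesis~(ii), the leaves corresponding to boundary components of a maximal strip are isolated in the transverse direction, so each such piece is a model strip in the sense of Definition~\ref{def:strip}. The delicate issues are countability of the strip family, the injectivity and closed-image requirements in part~(3) of the definition of a striped atlas, and verifying that the natural projection onto $\stripSurf$ is indeed a quotient map in the required topological sense. This technical verification is already carried out in~\cite{MaksymenkoPolulyakh:MFAT:2016}, and I would invoke it to close the argument.
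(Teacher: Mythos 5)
Your proposal follows essentially the same route as the paper's own proof: deduce $\condST\Rightarrow\condLF$ from Lemma~\ref{lm:D_Sigma_prop}\ref{enum:lm:D_Sigma_prop:S_dZ_D_loc_fin} (as you do), and delegate the remaining implications to the earlier Maksymenko--Polulyakh papers, with your added sketches only paraphrasing what those references establish. One small correction on attribution: the paper locates $\condLF + \condU \Rightarrow \condST$ specifically in~\cite[Theorem~1.8]{MaksymenkoPolulyakh:PGC:2016}, not in~\cite{MaksymenkoPolulyakh:MFAT:2016} as you write; the latter (its Theorem~2.8) is where $\condLT \Rightarrow \condU + \condCR$ and the equivalence $\condLT \Leftrightarrow \condU \Leftrightarrow \condCR$ under $\condLF$ are proved.
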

\begin{proof}
The implication $\condLT \Rightarrow \condU + \condCR$ and the equivalence of conditions $\condU$, $\condLT$, and $\condCR$ under assumption $\condLF$ is proved in~\cite[Theorem~2.8]{MaksymenkoPolulyakh:MFAT:2016}.

The implication $\condST\Rightarrow\condLF$ is contained in statement~\ref{enum:lm:D_Sigma_prop:S_dZ_D_loc_fin} of Lemma~\ref{lm:D_Sigma_prop}.

Finally, the implication $\condLF + \condU \Rightarrow \condST$ is established in~\cite[Theorem~1.8]{MaksymenkoPolulyakh:PGC:2016}.
\end{proof}

\begin{remark}
For a striped surface $(\stripSurf,\Partition)$ with a striped atlas $\qmap$ condition~$\condU$ is equivalent to the requirement that \textit{$\qmap$ does not glue together boundary intervals belonging to the same side of the same strip}.
More precisely, if $\qmap(\Xsp) = \qmap(\Ysp)$ for some distinct boundary intervals $\Xsp \subset \partial_{\epsilon}\strip$ and $\Ysp \subset \partial_{\epsilon'}\strip'$, then either $\strip\not=\strip'$ or $\epsilon\not=\epsilon'$.
\end{remark}

\subsection{Foliated surface that does not admit a striped atlas}
Consider the sequence $z_n = (0,\tfrac{1}{n})$, $n\in\bN$, of points of $y$-axis on the plane converging to the origin $O$ and put
$K =\{z_n\}_{n\in\bN} \cup O$.
Let also $\stripSurf = \bR^2\setminus K$.
Then $\stripSurf$ admits a foliation $\Partition$ into non-compact leaves being connected components of the intersection of $\stripSurf$ with horizontal lines.
\begin{figure}[h]
\includegraphics[height=2.5cm]{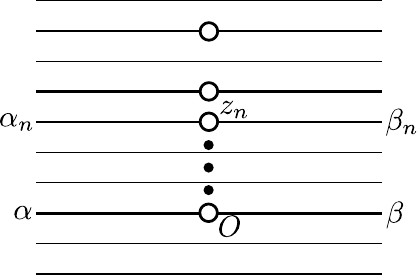}
\caption{}\label{fig:example_nostriped_atlas}
\end{figure}
\begin{sublemma}\label{lm:example_of_non-striped_surface}
The pair $(\stripSurf,\Partition)$ satisfies condition $\condCR$ and violates $\condLF$.
Hence it also violates $\condST$, that is $\stripSurf$ does not admit a striped atlas for which $\Partition$ is a canonical foliation.
\end{sublemma}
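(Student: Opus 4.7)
The plan is to analyze the leaves of $\Partition$ explicitly, construct cross-sections in a uniform way, and then exhibit an obvious sequence of special leaves that violates $\condLF$. Since $K$ lies on the $y$-axis, for $y\notin\{0\}\cup\{1/n\}_{n\in\bN}$ the full line $\bR\times\{y\}$ is contained in $\stripSurf$ and is a single leaf of $\Partition$; for $y\in\{0\}\cup\{1/n\}_{n\in\bN}$ the set $\bigl(\bR\times\{y\}\bigr)\cap\stripSurf=\bR\times\{y\}\setminus\{(0,y)\}$ splits into two rays $\omega_y^+=(0,+\infty)\times\{y\}$ and $\omega_y^-=(-\infty,0)\times\{y\}$, each of which is a leaf. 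In particular every leaf contains points with $x\neq 0$, and $\partial\stripSurf=\varnothing$, so $\JL=(-1,1)$ for every leaf.

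To verify $\condCR$, given any leaf $\omega$ I would pick a point $(x_0,y_0)\in\omega$ with $x_0\neq 0$ and, for sufficiently small $\epsilon>0$, define $\gamma:(-1,1)\to\stripSurf$ by $\gamma(t)=(x_0,y_0+\epsilon t)$. Since $x_0\neq 0$ the vertical segment avoids $K$ entirely and lies in $\stripSurf$. Clearly $\gamma(0)\in\omega$; moreover distinct values of $t$ yield distinct $y$-coordinates and therefore land on distinct horizontal lines, hence on distinct leaves.

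To refute $\condLF$, I would prove that for each $n\in\bN$ the ray $\omega_n^+=(0,+\infty)\times\{1/n\}$ is a special leaf and that the family $\{\omega_n^+\}_{n\in\bN}$ accumulates on the leaf $\omega_0^+=(0,+\infty)\times\{0\}$. Fix $n$ and choose $\delta>0$ so that the interval $(1/n-\delta,1/n+\delta)$ meets $\{0\}\cup\{1/m\}_{m\in\bN}$ only at $1/n$. Let $V$ be any saturated open neighborhood of $\omega_n^+$. Openness gives, for any $(x_0,1/n)\in\omega_n^+$, an open disk around $(x_0,1/n)$ inside $V$; so $V$ meets every horizontal line $\bR\times\{y\}$ for all $y$ in some neighborhood of $1/n$, and by the choice of $\delta$ each such line with $y\neq 1/n$ is a single leaf, so saturation forces $\bR\times\{y\}\subset V$. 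Letting $y\to 1/n$ with $y\neq 1/n$, the points $(x,y)$ with $x<0$ converge in $\stripSurf$ to points of $\omega_n^-$, so $\omega_n^-\subset\overline V$. Intersecting over all such $V$ gives $\omega_n^-\subset\hcl{\omega_n^+}$, so $\omega_n^+$ is special. Finally, for any $x_0>0$ the points $(x_0,1/n)\in\omega_n^+$ converge in $\stripSurf$ to $(x_0,0)\in\omega_0^+$, so every neighborhood of $(x_0,0)$ meets infinitely many $\omega_n^+$; hence $\Sigma$ is not locally finite, $\condLF$ fails, and by Theorem~\ref{th:rel_between_conditions} so does $\condST$.

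The main obstacle I expect is the specialness argument: one has to pass cleanly from ``$V$ is an open saturated neighborhood of $\omega_n^+$'' to ``$V$ contains full horizontal leaves on both sides of $1/n$'', which depends crucially on the choice of $\delta$ so that the approximating horizontal leaves are \emph{whole} lines rather than rays, and are therefore dragged across the $y$-axis by saturation. The rest is routine.
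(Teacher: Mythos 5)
Your argument is correct and follows essentially the same route as the paper: vertical segments off the $y$-axis give $\condCR$, and the special leaves along the punctured levels $y=1/n$ accumulate on the leaves at $y=0$, so $\Sigma$ is not locally finite and Theorem~\ref{th:rel_between_conditions} rules out $\condST$. The only difference is that you spell out the saturation argument showing $\omega_n^{-}\subset\hcl{\omega_n^{+}}$, which the paper dismisses as evident, and you verify specialness only for the leaves you need rather than listing all of $\Sigma$ --- both perfectly fine.
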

\begin{proof}
$\condCR$.
For each leaf $\leaf\in\Partition$ there exists a cross-section being just an one vertical interval in $\stripSurf$ transversal to $\leaf$.

To show that $\condLF$ fails, denote 
\begin{align*}
\alpha_{n} &= (-\infty,0) \times z_n, &
\beta_{n} &= (0, +\infty) \times z_n, \\
\alpha &= (-\infty,0) \times O, &
\beta &= (0, +\infty) \times O.
\end{align*}
Then $\Sigma = \{ \alpha_{n}, \beta_{n}\}_{n\in\bN} \cup \{ \alpha, \beta\}$ is the family of all special leaves of $\Partition$.
Evidently,
\begin{align*}
\hcl{\alpha_n}&=\hcl{\beta_n}=\{ \alpha_n, \beta_n\}, \ n\in\bN, &
\hcl{\alpha}&=\hcl{\beta}=\{ \alpha, \beta\},
\end{align*}
whence $\Sigma$ is not locally finite, since $\alpha_n$ converges to $\alpha$ and $\beta_n$ converges to $\beta$.
Therefore by Theorem~\ref{th:rel_between_conditions} $\stripSurf$ does not admit a striped atlas with a canonical foliation $\Partition$.
\end{proof}

Notice also that $\stripSurf'=\stripSurf\setminus\{\alpha,\beta\}$ is disconnected and each of its connected components admits a striped atlas.

\subsection{Foliation on the plane that does not admit a striped atlas}
We will construct a more complicated example on the plane $\bR^2$.
Consider the foliation $\Partition_0$ on the strip $\strip = \bR\times[0,1]$ shown in Figure~\ref{fig:example_on_r2}(a).
As indicated in Figure~\ref{fig:example_on_r2}(b) it is glued from four strips.
\begin{figure}[h]
\begin{tabular}{ccc}
\includegraphics[height=1.8cm]{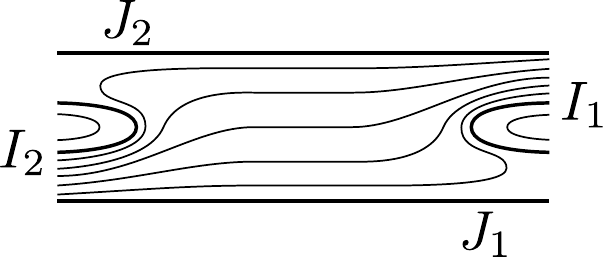} & \qquad\qquad &
\includegraphics[height=1.8cm]{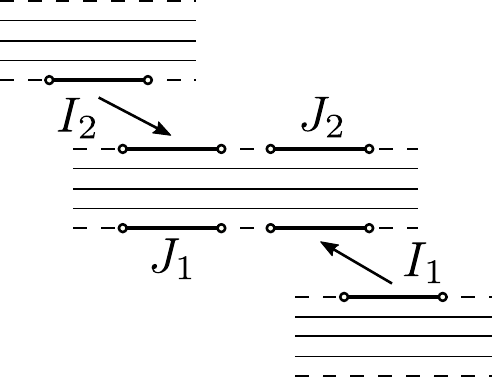} \\
(a) & & (b)
\end{tabular}
\caption{}\label{fig:example_on_r2}
\end{figure}
For each $k\in\bN$ define the following strip $\strip_k=\bR\times[\tfrac{1}{2^{k}}, \tfrac{1}{2^{k-1}}]$ and a homeomorphism 
\begin{align*}
&\phi_n:\strip \to \strip_k, &
\phi_k(x,y) &= (k x, (y+1)/ 2^{k} ),
\end{align*}
so it expands strip along $x$-axis and shrinks it along $y$-axis.

Denote by $\Partition_k$ foliation on $\strip_k$ being the image of $\Partition_0$ under $\phi_k$.
Then the union of all $\Partition_k$ gives a foliation on $\bR\times[0,1]$ which extends to the foliation on all of $\bR^2$ by parallel lines $\bR\times y$ for $y\in(-\infty,0]\cup(1,+\infty)$.
We will denote that foliation on $\bR^2$ by $\Partition$.

Then $\Sigma = \{ \phi_k(I_1), \phi_k(I_2), \phi_k(J_1) = \phi_{k+1}(J_2) \}_{k\in\bN}$ is the family of all special leaves of $\Partition$.
This set is not locally finite since the leaves $\phi_k(J_1)$ converge to the leaf $\bR\times 0$.
One easily check that $\Partition$ satisfies $\condCR$, whence by Theorem~\ref{th:rel_between_conditions} $(\bR^2,\Partition)$ does not admit a striped atlas.

\section{Homeotopy group of a canonical foliation}\label{sect:homeotopy_group}
Let $\stripSurf$ be a connected striped surface with a canonical foliation $\Partition$.
Denote by $\Homeo(\Partition)$ the group of all foliated homeomorphisms of $(\stripSurf,\Partition)$.
Thus, by definition, $\Homeo(\Partition)$ consists of all homeomorphism $\dif:\stripSurf\to\stripSurf$ such that for each leaf $\omega\in\Partition$ its image $\dif(\omega)$ is a leaf of $\Partition$ as well.
Endow $\Homeo(\Partition)$ with the compact open topology and let $\Homeo_0(\Partition)$ be the identity path component of $\Homeo(\Partition)$.
It consists of all homeomorphisms $\dif\in\Homeo(\Partition)$ isotopic to $\id_{\stripSurf}$ in $\Homeo(\Partition)$.
Then $\Homeo_0(\Partition)$ is a normal subgroup of $\Homeo(\Partition)$ and the quotient $\Homeo(\Partition) /\Homeo_0(\Partition)$ can be identified with the set $\pi_0\Homeo(\Partition)$ of all path components of $\Homeo(\Partition)$, that is $\pi_0\Homeo(\Partition) = \Homeo(\Partition) / \Homeo_0(\Partition)$.
This group will be called the \myemph{homeotopy} group of the foliation $\Partition$.

\begin{theorem}\label{th:HZ_AutG}{\rm c.f.\,\cite[Theorem~4.4]{MaksymenkoPolulyakh:PGC:2015}.}
Let $\qmap: \mathop{\sqcup}\limits_{\stInd\in\StInd}\strip_{\stInd} \to \stripSurf$ be a \myemph{reduced affine} striped atlas on a \myemph{connected} surface $\stripSurf$, $\Gr$ be its graph, and $\Partition$ be the corresponding canonical foliation.
\begin{enumerate}[leftmargin=2em, label={\rm(\roman*)}]
\item\label{enum:th:HZ_AutG:CylMob_HS_S1} 
If $\stripSurf$ is foliated homeomorphic either to an open cylinder from Example~\ref{exmp:open_cylinder} or a M\"obius band from Example~\ref{exmp:open_Mobius_band} then $\HZS$ is homotopy equivalent to the circle $S^1$.
\item\label{enum:th:HZ_AutG:Other_Cases_HS_pt}
Otherwise, $\HZS$ is contractible.
\end{enumerate}
In all the cases we have an isomorphism $\rho:\pi_0\HS \cong \AutG$.
\end{theorem}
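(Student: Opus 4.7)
Parts \ref{enum:th:HZ_AutG:CylMob_HS_S1} and \ref{enum:th:HZ_AutG:Other_Cases_HS_pt} on the homotopy type of $\HZS$ follow directly from \cite[Theorem~4.4]{MaksymenkoPolulyakh:PGC:2015}, whose principal technical hypothesis — affineness of the gluings — is exactly the setting given here. The substantive new content is the isomorphism $\rho:\pi_{0}\HS\cong\AutG$: I will first construct a surjective homomorphism $\widetilde{\rho}:\HS\to\AutG$ and then identify its kernel with $\HZS$.

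\textbf{Construction of $\widetilde{\rho}$ and surjectivity.} Let $\dif\in\HS$. The reducedness of $\qmap$ together with Lemma~\ref{lm:D_Sigma_prop} identifies $D:=\qmap\bigl(\sqcup_{\stInd}\partial\strip_{\stInd}\bigr)$ with the intrinsically defined set $\Sigma\cup\partial\stripSurf$, whence $\dif(D)=D$ for any foliated homeomorphism. Consequently $\dif$ permutes the connected components $\qmap(\Int\strip_{\stInd})$ of $\stripSurf\setminus D$, and since $\qmap$ is a bijection on each $\Int\strip_{\stInd}$ and a closed embedding on each boundary interval of $\preStripSurf$, there is a unique foliated lift $\tilde\dif:\preStripSurf\to\preStripSurf$ with $\qmap\circ\tilde\dif=\dif\circ\qmap$. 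The pair $(\tilde\dif,\dif)$ is then a self-equivalence of $\qmap$, and the first half of Theorem~\ref{th:iso_atlas_iso_graph} produces $\widetilde{\rho}(\dif)\in\AutG$; uniqueness of the lift makes $\widetilde{\rho}$ a homomorphism, while the second half of Theorem~\ref{th:iso_atlas_iso_graph} supplies surjectivity. Because $\AutG$ is discrete as a subgroup of a permutation group, $\widetilde{\rho}$ is trivial on the path component $\HZS$ and descends to the required homomorphism $\rho:\pi_{0}\HS\to\AutG$.

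\textbf{The main obstacle: $\ker\widetilde{\rho}\subset\HZS$.} This is the only genuinely nontrivial step. For $\dif\in\ker\widetilde{\rho}$, the lift $\tilde\dif$ preserves each model strip $\strip_{\stInd}$, both of its sides, and every individual boundary interval, and is orientation-preserving both along and transverse to leaves. Writing $\tilde\dif|_{\strip_{\stInd}}(x,y)=(g_{\stInd}(x,y),\,y)$ with $g_{\stInd}(\cdot,y)$ an orientation-preserving self-homeomorphism of $\bR$ (or of the corresponding boundary interval when $y$ lies in $\partial\strip_{\stInd}$), I set
\[
F_{t,\stInd}(x,y)\;=\;\bigl((1-t)\,g_{\stInd}(x,y)+t\,x,\;y\bigr),\qquad t\in[0,1].
\]
A convex combination of two orientation-preserving homeomorphisms of $\bR$ (or of a bounded open interval with matching endpoints) is again one, so each $F_{t,\stInd}$ is a foliated self-homeomorphism of $\strip_{\stInd}$, and the family depends continuously on $t$.

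\textbf{Descent via affineness.} The delicate point — where the affine hypothesis becomes indispensable — is the compatibility of $\{F_{t,\stInd}\}_{\stInd}$ with every gluing map $\phi_{\bdGlueInd}:\bdY_{\bdGlueInd}\to\bdX_{\bdGlueInd}$. At $t=0$ this is the relation $\phi_{\bdGlueInd}\circ\tilde\dif|_{\bdY_{\bdGlueInd}}=\tilde\dif|_{\bdX_{\bdGlueInd}}\circ\phi_{\bdGlueInd}$ inherited from $\dif$. Since $\phi_{\bdGlueInd}$ is \emph{affine} and affine maps commute with convex combinations, a direct one-line calculation yields $\phi_{\bdGlueInd}\circ F_{t,\stInd'}|_{\bdY_{\bdGlueInd}}=F_{t,\stInd}|_{\bdX_{\bdGlueInd}}\circ\phi_{\bdGlueInd}$ for every $t\in[0,1]$. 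Hence the family $\{F_{t,\stInd}\}$ descends to a continuous path $F_{t}:\stripSurf\to\stripSurf$ in $\HS$ from $\dif$ to $\id_{\stripSurf}$, forcing $\dif\in\HZS$ and completing the isomorphism $\rho:\pi_{0}\HS\cong\AutG$. This affineness-driven compatibility step is the main obstacle; it would fail for a general, non-affine striped atlas, and is precisely the point at which Theorem~\ref{th:all_striped_surf_are_affine} earns its place in the argument.
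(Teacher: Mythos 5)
Your skeleton coincides with the paper's: lift each $\kdif\in\HS$ to a self-equivalence $(\dif,\kdif)$ of the atlas $\qmap$, feed it into the first half of Theorem~\ref{th:iso_atlas_iso_graph} to get an automorphism of $\Gr$, obtain surjectivity from the converse half of that theorem, and identify the kernel with $\HZS$. The difference is in how the kernel (and the lift itself) is handled: the paper takes the existence and uniqueness of the lift, the equivalence ``$\kdif\in\HZS$ iff the induced automorphism is the identity'', and the statements about the homotopy type of $\HZS$ wholesale from Lemma~\ref{lm:hom_type_of_HZS}, i.e. from \cite[Theorem~4.4]{MaksymenkoPolulyakh:PGC:2015}, whereas you re-prove the inclusion $\ker\subset\HZS$ directly by a straight-line isotopy whose descent to $\stripSurf$ is exactly the affineness-commutes-with-convex-combinations computation. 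That computation is correct and makes visible where the affine hypothesis enters; the price is that you are re-doing part of the cited theorem, and your sketch has gaps.

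The concrete gap is your normal form for the lift: you write $\dif|_{\strip_{\stInd}}(x,y)=(g_{\stInd}(x,y),y)$, i.e. you assume the lift preserves the transverse coordinate. A foliated self-homeomorphism of a strip only has the form $(x,y)\mapsto(\alpha(x,y),\beta(y))$, where $\beta$ is an increasing homeomorphism of the level interval fixing the two boundary levels; even when the induced automorphism is the identity, $\beta$ need not be $\id$, so your homotopy formula does not start at $\dif$. The repair is in the same spirit — deform both coordinates, $F_{t,\stInd}(x,y)=\bigl((1-t)\alpha(x,y)+tx,\ (1-t)\beta(y)+ty\bigr)$, noting that $\beta$ fixes the levels carrying the glued boundary intervals so the affine-descent identity is unaffected — but as written the step is unjustified. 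Two smaller points: the inclusion $\HZS\subset\ker\widetilde\rho$ needs more than ``$\AutG$ is discrete''; you must argue that the induced combinatorial data is constant along an isotopy, e.g. for $z\in\qmap(\Int\strip_{\stInd})$ the path $t\mapsto \kdif_t(z)$ stays in $\stripSurf\setminus D$ because $\kdif_t(D)=D$, and $\stripSurf\setminus D$ is a disjoint union of the open sets $\qmap(\Int\strip_{\mu})$, with similar arguments for $\eiso$, $\lori$, $\tori$. Finally, your construction of the lift via invariance of $D=\Sigma\cup\partial\stripSurf$ genuinely uses reducedness and fails for the cylinder and M\"obius band of Examples~\ref{exmp:open_cylinder} and~\ref{exmp:open_Mobius_band} (there $D$ is a single non-special leaf, and rotations do not preserve it), so case~\ref{enum:th:HZ_AutG:CylMob_HS_S1} must be treated separately, as the paper does.
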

\begin{proof}
\ref{enum:th:HZ_AutG:CylMob_HS_S1}
Suppose $\stripSurf$ is either an open cylinder or a M\"obius band.
Since by Examples~\ref{exmp:open_cylinder} and~\ref{exmp:open_Mobius_band} $\AutG \cong \mZZ \times \mZZ$ we need only to show that 
$\HZS$ is homotopy equivalent to the circle and $\pi_0\HS \cong \mZZ \times \mZZ$ as well.
We leave this statement as an exercise for the reader.

\ref{enum:th:HZ_AutG:Other_Cases_HS_pt}.
Now let $\stripSurf$ be \textit{neither} an open foliated cylinder \textit{nor} a foliated M\"obius band.
Then the following statement is a reformulation of~\cite[Theorem~4.4]{MaksymenkoPolulyakh:PGC:2015} in terms of striped atlases and their graphs.
In particular, it contains~\ref{enum:th:HZ_AutG:Other_Cases_HS_pt}. 
\begin{sublemma}\label{lm:hom_type_of_HZS}
{\rm c.f. \cite[Theorem~4.4]{MaksymenkoPolulyakh:PGC:2015}.}
For each $\kdif\in\HS$ there exists a unique homeomorphism $\dif:\preStripSurf \to \preStripSurf$ such that $\qmap \circ \dif = \kdif\circ\qmap$, i.e.\! $(\dif,\kdif)$ is a self-equivalence of the atlas $\qmap$.
Moreover, $\kdif\in\HZS$ if and only if $(\dif,\kdif)$ induces the identity automorphism of $\Gr$.

Also the group $\HZS$ is contractible.\qed
\end{sublemma}

It remains to construct an isomorphism $\rho:\pi_0\HS \cong \AutG$.
Let $\kdif\in\HS$ and $(\dif,\kdif)$ be the self-equivalence of the atlas $\qmap$.
Denote by $\rho(\kdif)$ the automorphism of $\Gr$ induced by $(\dif,\kdif)$, see Theorem~\ref{th:iso_atlas_iso_graph}.
Then one easily check that the correspondence $\kdif\mapsto\rho(\kdif)$ is a homomorphism $\rho:\pi_0\HS \cong \AutG$.

Moreover, by Theorem~\ref{th:iso_atlas_iso_graph} $\rho$ is surjective, and by Lemma~\ref{lm:hom_type_of_HZS} its kernel is $\HZS$.
This gives the required isomorphism 
\[ \pi_0\HS =  \HS/\HZS =  \HS/\ker(\rho) \cong \AutG.\]
Theorem~\ref{th:HZ_AutG} is completed.
\end{proof}

\bigskip

\begin{flushleft}
{\sc Sergiy Maksymenko} \\
Institute of Mathematics of NAS of Ukraine, Tereshchenkivska str. 3, Kyiv, 01004, Ukraine, \\
email: \texttt{maks@imath.kiev.ua}   

\bigskip

{\sc Eugene Polulyakh} \\
Institute of Mathematics of NAS of Ukraine, Tereshchenkivska str. 3, Kyiv, 01004, Ukraine, \\
email: \texttt{polulyah@imath.kiev.ua} 

\bigskip

{\sc Yuliya Soroka} \\
Taras Shevchenko National University of Kyiv \\
email: \texttt{ladyice09@gmail.com}
\end{flushleft}


\begin{thebibliography}{10}

\bibitem{Boothby:AJM_1:1951}
William~M. Boothby.
\newblock The topology of regular curve families with multiple saddle points.
\newblock {\em Amer. J. Math.}, 73:405--438, 1951.

\bibitem{Boothby:AJM_2:1951}
William~M. Boothby.
\newblock The topology of the level curves of harmonic functions with critical
  points.
\newblock {\em Amer. J. Math.}, 73:512--538, 1951.

\bibitem{GodbillonReeb:EM:1966}
C.~Godbillon, G.~Reeb.
\newblock Fibr\'es sur le branchement simple.
\newblock {\em Enseignement Math. (2)}, 12:277--287, 1966.

\bibitem{HaefligerReeb:EM:1957}
Andr{\'e} Haefliger, Georges Reeb.
\newblock Vari\'et\'es (non s\'epar\'ees) \`a une dimension et structures
  feuillet\'ees du plan.
\newblock {\em Enseignement Math. (2)}, 3:107--125, 1957.

\bibitem{JenkinsMorse:AJM:1952}
James Jenkins, Marston Morse.
\newblock Contour equivalent pseudoharmonic functions and pseudoconjugates.
\newblock {\em Amer. J. Math.}, 74:23--51, 1952.

\bibitem{Kaplan:DJM:1940}
Wilfred Kaplan.
\newblock Regular curve-families filling the plane, {I}.
\newblock {\em Duke Math. J.}, 7:154--185, 1940.

\bibitem{Kaplan:DJM:1941}
Wilfred Kaplan.
\newblock Regular curve-families filling the plane, {II}.
\newblock {\em Duke Math J.}, 8:11--46, 1941.

\bibitem{MaksymenkoPolulyakh:PGC:2015}
Sergiy Maksymenko, Eugene Polulyakh.
\newblock Foliations with non-compact leaves on surfaces.
\newblock {\em Proceedings of Geometric Center}, 8(3--4):17--30, 2015.

\bibitem{MaksymenkoPolulyakh:MFAT:2016}
Sergiy Maksymenko, Eugene Polulyakh.
\newblock Foliations with all non­closedleaves on non­compact surfaces.
\newblock {\em Methods Funct. Anal. Topology}, 22(3):266--282, 2016.

\bibitem{MaksymenkoPolulyakh:PGC:2016}
Sergiy Maksymenko, Eugene Polulyakh.
\newblock One-dimensional foliations on topological manifolds.
\newblock {\em Proceedings of Geometric Center}, 9(2):1--23, 2016.

\bibitem{Morse:FM:1952}
Marston Morse.
\newblock The existence of pseudoconjugates on {R}iemann surfaces.
\newblock {\em Fund. Math.}, 39:269--287 (1953), 1952.

\bibitem{Soroka:MFAT:2016}
Yuliya Soroka.
\newblock Homeotopy groups of rooted tree like non-singular foliations on the
  plane.
\newblock {\em Methods Funct. Anal. Topology}, 22(3):283--294, 2016.

\bibitem{Soroka:UMJ:2017}
Yuliya Soroka.
\newblock Homeotopy groups of nonsingular foliations of a plane.
\newblock {\em Ukrainian Mathematical Journal}, 2017, to appear.

\end{thebibliography}
\end{document}